\newcounter{AbcT}
\newtheorem {theo}    {Theorem}[section]
\newtheorem {Theorem}    {Theorem}[section]
\newtheorem {Lemma}      [Theorem]    {Lemma}
\newtheorem {Corollary}  [Theorem]    {Corollary}
\newtheorem {cor}  [Theorem]    {Corollary}
\theoremstyle{definition}   
\newtheorem {Remark}[Theorem]           {Remark}  
\newtheorem {Definition} [Theorem]{Definition}
\newcommand{\ignore}[1]{}
\def\eps{\epsilon}
\def\E{{\bf{E}}}
\def\P{{\bf{P}}}
\def\Var{{\bf{Var}}}
\def\R{{\mathbb{R}}}
\def\v0{{\bf 0}}
\def\0{\hat{0}}
\def\1{\hat{1}}
\def\path{{\tt path}}
\def\phi{\varphi}
\def\be{\begin{equation}}
\def\ee{\end{equation}}
\definecolor{Red}{rgb}{1,0,0}
\definecolor{Blue}{rgb}{0,0,1}
\definecolor{Olive}{rgb}{0.41,0.55,0.13}
\definecolor{Green}{rgb}{0,1,0}
\definecolor{MGreen}{rgb}{0,0.8,0}
\definecolor{DGreen}{rgb}{0,0.55,0}
\definecolor{Yellow}{rgb}{1,1,0}
\definecolor{Cyan}{rgb}{0,1,1}
\definecolor{Magenta}{rgb}{1,0,1}
\definecolor{Orange}{rgb}{1,.5,0}
\definecolor{Violet}{rgb}{.5,0,.5}
\definecolor{Purple}{rgb}{.75,0,.25}
\definecolor{Brown}{rgb}{.75,.5,.25}
\definecolor{Grey}{rgb}{.5,.5,.5}
\definecolor{Black}{rgb}{0,0,0}
\title{
Sharp Thresholds for Monotone Non Boolean Functions and Social Choice Theory
}
\author{Gil Kalai \thanks{Hebrew University of Jerusalem, Yale University, and Microsoft, Israel. Supported by  ISF, and NSF awards} \and Elchanan Mossel \thanks{U.C. Berkeley and Weizmann Institute of Science. Supported by DMS 0548249 (CAREER) award, by DOD ONR grant  N0014-07-1-05-06, by ISF grant 1300/08 and by a Minerva Grant}
}
\begin{document}

\maketitle

\thispagestyle{empty}

\begin{abstract}

A key fact in the theory of Boolean functions $f : \{0,1\}^n \to
\{0,1\}$ is that they often undergo sharp thresholds. For example:
if the function $f : \{0,1\}^n \to \{0,1\}$
is monotone and symmetric under a transitive action with $\E_p[f] = \eps$ and $\E_q[f] =
1-\eps$ then $q-p \to 0$ as $n \to \infty$. Here $\E_p$ denotes the
product probability measure on $\{0,1\}^n$ where each coordinate takes
the value $1$ independently with probability $p$.

The fact that symmetric functions undergo sharp thresholds is
 important in the study of random graphs and constraint satisfaction 
problems as well as in social choice.

In this paper we prove sharp thresholds for monotone functions
taking values in an arbitrary finite sets. We also provide
examples of applications of
the results to social choice and to random graph problems.

Among the applications is an analog for Condorcet's jury theorem and an
indeterminacy result for a large class of social choice functions.

\end{abstract}

\section{Introduction}
\subsection {Sharp thresholds}
A key fact in the theory of Boolean functions is that monotone
symmetric functions undergo sharp thresholds. This fact has
fundamental significance in the study of constraint satisfaction
problems, random graph processes and percolation and in social
choice.

The results of~\cite{FriedgutKalai:96} show that for every $0 < \eps <
1/2$ there exists a $C > 0$ such that for all $n$ and all
$f : \{0,1\}^n \to \{0,1\}$ which is monotone and symmetric (see
definitions below) if $\E_p[f] = \eps$ and $\E_q[f] = 1-\eps$ then
$0 < q-p < C (\log n)^{-1}$.
This result implies in particular that ``Every monotone graph property
has a sharp threshold'' as~\cite{FriedgutKalai:96} is titled.
It also implies that symmetric voting systems aggregate information
well, see for example~\cite{Kalai:04} and the examples
provided in the current paper.

\subsection{Notation and Main Results}

Let $A$ be a finite set. Let $X = A^n$. For $\sigma \in S(n)$, a
permutation on $n$ elements and $x
\in A^n$ we denote by $y = x_{\sigma}$ the vector satisfying
$y_i = x_{\sigma(i)}$ for all $i$. For $\sigma \in S(A)$ we write
$y = \sigma(x)$ for the vector satisfying $y_i = \sigma(x_i)$ for all
$i$. For $a \in A$ and $x,y \in X$ we
write $x \leq_a y$ if $\{i : x_i = a\} \subset \{i : y_i = a\}$ and for all $i$ such that $y_i \neq a$ it
holds that $x_i = y_i$. In other words if $x \leq_a y$ then for all $i$ if $x_i \neq y_i$ then $y_i = a$.
It is easy to see that $\leq_a$ defines a partial order on $X$.

We say that $f : X = A^n \to A$ is {\em monotone} if
for all $a \in A$ and $x,y \in X$ such that $x \leq_a y$ it holds that
$f(x) = a$ implies that $f(y) = a$.
We say that $f$ is {\em symmetric} if there exists a transitive
group $\Sigma \subset S(n)$ such that $f(x_{\sigma}) = f(x)$ for all
$x \in X$ and $\sigma \in \Sigma$.
We say that $f$ is {\em fair} if for all $\sigma \in S(A)$ and all
$x \in X$ it holds that $f(\sigma(x)) = \sigma(f(x))$.

Let $\Delta[A]$ denote the simplex of probability measures on $A$ and
let $\gamma$ denote the standard probability measure on $\Delta[A]$.
For
$\mu \in \Delta[A]$ denote by $\P_{\mu}$ the measure
$\mu^{\otimes n}$ on $X$. We denote by $\E_{\mu}$ the expected value
according to the measure $\P_{\mu}$.
For any measure $\mu \in \Delta[A]$, we write $\mu^{\ast}$ for the minimal probability $\mu$ assigns to any of
the atoms in $A$.

In our main result we show that:

\begin{Theorem} \label{thm:main}
There exists an absolute constant $C = C(|A|)$ such
that if $f$ is symmetric and monotone then for
any $a \in A$ and $\eps > 0$ it holds that
\[
\gamma[\mu : \eps \leq P_{\mu}[f = a] \leq 1-\eps] \leq
C (\log(1-\eps) - \log(\eps)) \frac{\log \log n}{\log n}.
\]
\end{Theorem}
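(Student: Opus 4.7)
Fix $a \in A$ and decompose any $\mu \in \Delta[A]$ as $\mu = p\,\delta_a + (1-p)\,\nu$, with $p = \mu(a) \in [0,1]$ and $\nu$ the conditional of $\mu$ on $A \setminus \{a\}$. Disintegrate $\gamma$ over this parametrization. Since $f$ is $\leq_a$-monotone, the slice $p \mapsto g_\nu(p) := \P_\mu[f = a]$ is nondecreasing for each fixed $\nu$, so $\{p : \eps \le g_\nu(p) \le 1-\eps\}$ is an interval $[\alpha(\nu),\beta(\nu)]$. With $\mu^\ast := \min_{b \in A}\mu(b)$, the strategy is to combine (i) a uniform upper bound on $\bigl|\{p \in [\alpha(\nu),\beta(\nu)] : \mu^\ast \ge \delta\}\bigr|$ and (ii) the estimate $\gamma\{\mu : \mu^\ast \le \delta\} = O(\delta)$, optimizing over the cutoff $\delta$ at the end.

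\textbf{Russo and KKL.} A Margulis--Russo-style identity for the monotone event $\{f = a\}$ gives $g_\nu'(p) = n \cdot I^a(p,\nu)$, where $I^a$ is the common per-coordinate $\leq_a$-influence (equal across coordinates because $f$ is symmetric under a transitive $\Sigma \subset S(n)$). Applied to the monotone symmetric Boolean-valued function $h(x) := \mathbf{1}[f(x) = a]$ on the product space $(A^n,\mu^{\otimes n})$, a Bourgain--Kahn--Kalai--Katznelson--Linial-type inequality yields
\[
g_\nu'(p) \;\ge\; \frac{c\, g_\nu(p)\bigl(1-g_\nu(p)\bigr)\,\log n}{\log(1/\mu^\ast)}.
\]
Dividing by $g_\nu(1-g_\nu)$ and integrating the resulting log-derivative over $\{p \in [\alpha(\nu),\beta(\nu)] : \mu^\ast \ge \delta\}$ bounds the left-hand side by $2(\log(1-\eps) - \log\eps)$, so
\[
\bigl|\{p \in [\alpha(\nu),\beta(\nu)] : \mu^\ast(p,\nu) \ge \delta\}\bigr| \;\le\; \frac{C\,(\log(1-\eps) - \log\eps)\,\log(1/\delta)}{\log n}.
\]

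\textbf{Finish.} Under $\gamma$ each coordinate $\mu(b)$ has bounded density near $0$, hence $\gamma\{\mu : \mu^\ast \le \delta\} \le C'\,\delta$. Fubini in $\nu$ and the previous display combine to give
\[
\gamma\bigl\{\mu : \eps \le \P_\mu[f = a] \le 1-\eps\bigr\} \;\le\; C'\,\delta + \frac{C\,(\log(1-\eps) - \log\eps)\,\log(1/\delta)}{\log n}.
\]
Setting $\delta = \log\log n / \log n$ balances the two terms and yields the claimed bound.

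\textbf{Main obstacle.} The principal analytic input is the KKL/BKKKL-type lower bound on the per-coordinate influence of the symmetric, monotone, Boolean-valued $h$ on the non-Boolean space $(A^n, \mu^{\otimes n})$, with the $\log n / \log(1/\mu^\ast)$ scaling robust to $\mu$ being highly biased. This requires either invoking a hypercontractive inequality directly in the $A^n$-setting (with the $\leq_a$-monotonicity built in) or a careful encoding reduction to $\{0,1\}^{n\lceil \log_2 |A|\rceil}$ that preserves both the transitive symmetry and the $\leq_a$-monotonicity. All the remaining steps --- Russo's identity, the log-derivative integration, and the optimization over $\delta$ --- are then routine, and the unavoidable $\log\log n$ factor is precisely the price of integrating $1/\log(1/\mu^\ast)$ near the boundary of the simplex.
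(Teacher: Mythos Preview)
Your proposal is correct and follows essentially the same route as the paper: parametrize $\Delta[A]$ by rays $\mu^t = t\delta_a + (1-t)\nu$, combine a Russo-type derivative formula with a Talagrand/BKKKL influence-sum lower bound of order $\log n/\log(1/\mu^\ast)$ for symmetric $\{0,1\}$-valued functions on general product spaces (which the paper supplies via Efron--Stein plus hypercontractivity), integrate the resulting logarithmic differential inequality along each ray, and trade off the boundary region $\{\mu^\ast \le \delta\}$ against the $\log(1/\delta)$ loss. The only cosmetic discrepancy is your choice $\delta = \log\log n/\log n$, which does not carry the factor $\log(1-\eps)-\log\eps$ and so slightly overshoots when $\eps$ is near $1/2$; the paper takes $\delta$ proportional to $(\log(1-\eps)-\log\eps)/\log n$ to get the stated dependence exactly.
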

The result above establishes sharp threshold for symmetric functions
as it shows that for almost all probability measures $f$ takes one
specific value with probability at least $1-\eps$.
\begin{Remark}
It is interesting to compare the results established here to those
of~\cite{FriedgutKalai:96}. For $|A|=2$ our results give a threshold
interval of length $O(\log \log n/\log n)$ compared to the results
of~\cite{FriedgutKalai:96} which give threshold interval of length
$O(1/\log n)$. In the binary case the later result is tight.
It is natural to conjecture that the threshold is always of measure $O(1/\log n)$.
\end{Remark}

\subsubsection{Other applications}

Various applications of the threshold result
to problems involving distributions of edge colored graphs are given in
subsection~\ref{subsec:graph}.

\subsection {Social choice background, and applications}

We will describe two main applications of our main result to social choice theory. The first application gives an extension
of Condorcet's Jury theorem for monotone choice functions for more than two candidates and large classes of voting rules.
The second application is to indeterminacy results for generalized social choice functions.

\subsubsection{Aggregation of Information}
The law of large numbers implies that in an election between
two candidates denoted $0$ and $1$, if every voter votes for $0$
with probability $p>1/2$ and for $1$ with probability $1-p$ and if
these votes are independent, then as the number of voters
tends to infinity the probability that $0$ will be elected tends
to one. This fact is referred to as Condorcet's Jury Theorem.

This theorem can be interpreted as saying that
even if agents receive very poor but independent signals indicating
which  decision is correct,  majority voting will nevertheless
result in the correct decision being taken with a high
probability if  there are enough agents (and each agent votes
according to the  signal he receives).
This phenomenon is referred to
as asymptotically complete aggregation of information
and it plays an important role in theoretical economics.

More recent results studied aggregation of information for general
symmetric fair functions $f : \{0,1\}^n \to \{0,1\}$. Recall that
such a function is fair if $f(1-x_1,\ldots,1-x_n) =
1-f(x_1,\ldots,x_n)$. In this setup the results
of~\cite{FriedgutKalai:96} imply that for every $p > 1/2$ and every
symmetric fair function on $n$ voters it holds $\E_p[f] > 1 - o(1)$.

Here we derive a similar result in the case of an election between
$[q]$ candidates. Note that the conditions of monotonicity and fairness are both natural in this setup.
\begin{itemize}
\item
Monotonicity implies that if in a certain vector $x \in [q]^n$ of voters a certain candidate $i$ is elected and if $y$ is identical to $x$ except that some of the voters changed their mind as to vote $i$, then the outcome of the vote for $y$ should also be $i$.
\item
Fairness means that all the candidates are treated equally.
\end{itemize}

\begin{Theorem}
\label {t:condorcet}
For every $q$ there exists a constant $C = C(q)$ for which the following holds for every $\eps < 1/3$.
Let $\mu \in \Delta(q)$ and $i \in [q]$ satisfy that
\[
\mu(i) > \max_{j \neq i} \mu(j) + C (\log(1-\eps) - \log(1/q)) \frac{\log \log n}{\log n}.
\]
Then for every fair monotone function $f : [q]^n \to [q]$ it holds that
\[
\mu[f = i] \geq 1-\eps.
\]
\end{Theorem}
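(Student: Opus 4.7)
The plan is to combine fairness (the $S(A)$-symmetry $f(\sigma x) = \sigma f(x)$), monotonicity (a coupling between $\P_\mu$-probabilities for different $\mu$), and Theorem~\ref{thm:main} (sharp threshold in $\gamma$-measure). Writing $p_a(\mu) := \P_\mu[f=a]$, fairness yields the identity $p_a(\mu) = p_{\sigma(a)}(\sigma \mu)$ for every $\sigma \in S(A)$, where $(\sigma \mu)(b) := \mu(\sigma^{-1}(b))$. Monotonicity yields a comparison on distributions (from the coupling derived from the partial order $\leq_a$): if $\nu(a) \geq \mu(a)$ and $\nu(b) \leq \mu(b)$ for all $b \neq a$, then $p_a(\nu) \geq p_a(\mu)$.

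The first step is to show that the hypothesis $\mu(i) > \mu(j)$ (for every $j \neq i$) already forces $p_i(\mu) \geq p_j(\mu)$, and hence $p_i(\mu) \geq 1/q$. For each $j \neq i$ let $\sigma = (ij)$: fairness gives $p_i(\mu) = p_j(\sigma\mu)$, and since $(\sigma\mu)(j) = \mu(i) > \mu(j)$, $(\sigma\mu)(i) = \mu(j) \leq \mu(i)$, and other masses are unchanged, the monotonicity comparison for $p_j$ gives $p_j(\sigma\mu) \geq p_j(\mu)$. An analogous argument rules out $p_j(\mu) \geq 1-\eps$ for any $j \neq i$: if it held, then $p_i(\mu) \leq \eps$; but $\sigma\mu$ satisfies $(\sigma\mu)(i) < \mu(i)$ and $(\sigma\mu)(a) \geq \mu(a)$ for $a \neq i$, so the monotonicity comparison for $p_i$ gives $p_j(\mu) = p_i(\sigma\mu) \leq p_i(\mu) \leq \eps$, a contradiction when $\eps < 1/2$. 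The only remaining case inconsistent with the conclusion is thus $p_i(\mu) \in [1/q,\,1-\eps)$.

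Ruling out this intermediate case is where Theorem~\ref{thm:main} enters. Applied to $a=i$ with a suitable threshold parameter, it bounds the $\gamma$-measure of $\{\mu : p_i(\mu) \in [1/q, 1-\eps]\}$ by $\delta := C(\log(1-\eps) - \log(1/q))\frac{\log\log n}{\log n}$, which matches the gap in the hypothesis. To convert this measure bound into a pointwise statement at our $\mu$, I would decompose $\Delta[q]$ along the direction $v = e_i - \frac{1}{q-1}\sum_{j \neq i} e_j$: on the 1D slice $\mu_t = \mu + tv$, $p_i(\mu_t)$ is non-decreasing in $t$ by monotonicity, so $\{t : p_i(\mu_t) \in [1/q, 1-\eps]\}$ is an interval. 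A Fubini decomposition of $\gamma$ in direction $v$ expresses $\gamma\{p_i \in [1/q, 1-\eps]\}$ as an integral of these 1D lengths. The first step localizes the threshold interval near the "fair" hyperplane $\{\mu_t(i) = \max_{j \neq i}\mu_t(j)\}$ (where $p_i(\mu_t) \leq 1/2$ by pairing with the dominant $j$ under $(ij)$), so the 1D length on each slice is at most $\delta$; since the hypothesis places $\mu$ more than $\delta$ past this hyperplane along $v$, the point $\mu$ lies past the 1D threshold interval, giving $p_i(\mu) \geq 1-\eps$.

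The main obstacle is exactly this pointwise-from-measure step: Theorem~\ref{thm:main} only controls a $\gamma$-measure, while the conclusion is a pointwise lower bound. Monotonicity reduces the task to bounding the length of a single 1D threshold interval on the slice through $\mu$; the fairness-based localization of that interval around the fair hyperplane is what lets the average bound from Theorem~\ref{thm:main} be transferred to each individual slice and produce the explicit gap $\delta$, with the factor $\log(1-\eps)-\log(1/q)$ (rather than the $\log(1-\eps)-\log\eps$ of Theorem~\ref{thm:main}) reflecting that the threshold interval begins at the fair value $1/q$ rather than at $\eps$.
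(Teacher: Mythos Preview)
Your first step is correct and matches the paper: fairness plus monotonicity give $p_i(\mu) \geq p_j(\mu)$ for every $j$, hence $p_i(\mu) \geq 1/q$ at (and beyond) the hyperplane $\{\mu(i) = \max_{j \neq i}\mu(j)\}$.

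The gap is in the second step. You propose to invoke Theorem~\ref{thm:main} as a black box, obtaining only a bound on the $\gamma$-\emph{measure} of $\{\mu : p_i(\mu)\in[1/q,1-\eps]\}$, and then to turn this into a bound on the \emph{length} of the threshold interval on the single one-dimensional slice through your given $\mu$. But Fubini in the direction $v$ controls only the \emph{average} of these one-dimensional lengths over all slices, not each length individually. Your ``localization'' observation---that on every slice the interval begins near the fair hyperplane---tells you \emph{where} the interval sits, not how long it is; it does nothing to upgrade an average bound to a pointwise one. And there is genuinely no uniformity to exploit: the Talagrand-type influence bound underlying Theorem~\ref{thm:main} carries a factor $\log(1/\mu^{\ast})$, so slices passing close to a face of the simplex have strictly longer threshold intervals than slices through the interior. (This is also why the factor $\log(1-\eps)-\log(1/q)$ cannot be read off from the statement of Theorem~\ref{thm:main}, which only produces $\log(1-\eps)-\log(\eps)$.)

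The paper does not use Theorem~\ref{thm:main} as a black box. It re-runs its \emph{proof}---Russo's formula (Lemma~\ref{lem:der2}) together with the influence-sum estimate (Corollary~\ref{cor:talag2})---directly on the segment $\mu^t = t\,\delta_i + (1-t)\mu'$ through the specific $\mu$ at hand. This yields the logarithmic-derivative inequality $(\log G)'(t) \geq c\,\log n / \log(1/\mu^{\ast})$ on that very segment, and integrating from the fair point $s$ (where $G(s)\geq 1/q$ by your first step) to $t^{\ast}$ gives the pointwise gap $|t^{\ast}-s|\leq C(\log(1-\eps)-\log(1/q))\log\log n/\log n$. The paper first assumes $\mu(j)>1/\log n$ for all $j$ so that $\log(1/\mu^{\ast})\leq \log\log n$, and then removes this assumption by a short perturbation-plus-monotonicity argument; your proposal does not address this boundary degradation at all.
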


In words - the proposition claims that for any measure on the votes that has a $\log \log n / \log n$ bias towards one of the candidates $i$ and any fair monotone voting function it holds that $i$ will be elected with high probability. The proof is given in subsection~\ref{subsec:jury}.

\subsubsection{Indeterminacy}

Arrow's impossibility
theorem asserts that under certain natural conditions,
if there are at least three alternatives
then every non-dictatorial social choice
gives rise to a non-rational
choice function, i.e.,  there exist profiles
such that the social choice
is not rational. Arrow's theorem can be seen in the context of
Condorcet's ``paradox'' which demonstrates
that the majority rule may result in
the society preferring A over B , B over C and C over A.
Arrow's theorem shows that such ``paradoxes''
cannot be avoided with {\it any}
non-dictatorial voting method. It is the general form of Arrow's theorem,
which can be applied to general schemes for aggregating individual
rational choices, that made it so important in economic theory.

McGarvey~\cite{McGarvey:53} appears to have been the first to show that for every
asymmetric relation $R$ on a finite set of
candidates there is a strict-preferences
(linear orders, no ties) voter profile that has the relation $R$
as its strict
simple majority relation.
This implies that we cannot deduce
the society's
choice between two candidates even if we know the
society's choice between every other pair of candidates. We refer to this phenomena as ``complete indeterminacy''.

Saari~\cite{Saari:89} proved that the plurality method
gives rise to {\it every} choice function for sufficiently large societies.
(In fact, he proved more: the ranking on any subset of the
alternatives can be prescribed).
This implies that knowing the outcome of the plurality choice for
several examples, where
each example consists of a set $S$ of alternatives and the chosen
element $c(S)$ for $S$, cannot teach us
anything about the outcome for a set of alternatives which is
not among the examples we have already seen.

In \cite{Kalai:04} McGarvey's theorem is extended to sequences
of neutral social welfare functions
in which the maximum Shapley-Shubik power index tends to 0.
The proof relied on threshold properties of Boolean functions.
In particular,
McGarvey's theorem extends to neutral social welfare functions
which are invariant under a transitive group of permutations of the
individuals. We describe a similar extension of Saari's theorem.

\section{Proof of the main theorem} \label{sec:main}
The proof follows the same simple idea used in~\cite{FriedgutKalai:96}.
That is, we use information on the influences in order to deduce that $\E_{\mu}[f]$ varies quickly
as a function of $\mu$. We thus derive a generalization of Russo's formula~\cite{Margulis:74,Russo:81} which expresses derivatives in terms of
influences. We then use the fact that functions taking only a bounded number of values must have large influence sums.

Depending on the type of influence sum bounds, one obtain different results.
Using the results of~\cite{BKKKL:92} it is possible to obtain a bound on the threshold interval length of order
$O(\log^{1/3} n)$.Here we derive the better bound of order $O(\log \log n / \log n)$ using a generalization of the results of Talagrand~\cite{Talagrand:94}. This generalization was proven in a course taught by the second author on Fall 2005.
A draft of the proof was written in scribe notes of the course by Asaf Nachmias at\\
$http://www.stat.berkeley.edu/\sim mossel/teach/206af05/scribes/oct25.pdf$. For completeness we have corrected and completed the proof.

It seems like in order to obtain a threshold bound of $O(1 / \log n)$ it would be needed to derive a tighter influence sum bound tailored to the setup here.

From now on, without loss of generality assume $A = [q] = \{0,1\ldots,q-1\}$.
We will consider functions $f : [q]^n \to \{0,1\}$ and say that such
a function is {\em $0$-monotone} if $x \leq_0 y$ implies that
$f(x) \leq f(y)$.

Clearly in order to prove Theorem~\ref{thm:main},
it suffices to prove that there exists $C > 0$ such that if $f$ is symmetric and $0$-monotone then for all
$\eps > 0$ it holds that
\begin{equation} \label{eq:main_claim}
\gamma[\mu : \eps \leq P_{\mu}[f = 0] \leq 1-\eps] \leq C(\log(1-\eps) - \log(\eps)) \frac{\log \log n}{\log n}.
\end{equation}

\subsection{Influences}
We will use the $L_2$ {\em influence}

Let $f : X \to \R$.

Let $I_{\mu}^i(f)$, the $i$'th influence of $f$ according to $\mu \in
\Delta[A]$ by
\[
I_{\mu}^i(f) =
\E_{{\mu}}[\Var[f | x_{1},\ldots,x_{i-1},x_{i+1},\ldots,x_n]]
\]

In Corollary~\ref{cor:talag2}
we will derive the following lower bound on influence sum which is a consequence of a generalization of a result
of Talagrand~\cite{Talagrand:94}. We restate the corollary here:

\begin{Corollary} \label{talag}
There exists some universal constant $C$ such that for any probability space $(\Omega,\mu)$
and any function $f : \Omega^n \to \{0,1\}$ which is symmetric it holds that
it holds that
\[
\sum _{i=1}^n {I_i(f)} \geq \frac{C}{\log(1/\alpha)} \log n \Var(f).
\]
\end{Corollary}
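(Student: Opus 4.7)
The plan is to deduce the corollary from a stronger, non-symmetric Talagrand-type inequality on general product probability spaces, and then use the symmetry hypothesis to convert a $\log(1/I_i(f))$ denominator into the $\log n$ factor in the conclusion.

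The main technical step is to establish the following inequality, valid without any symmetry assumption: for any $g : \Omega^n \to \{0,1\}$ on a product measure whose smallest atom has mass $\alpha$,
\[
\sum_{i=1}^n \frac{I_i(g)}{\log(1/I_i(g))} \;\geq\; \frac{c\,\Var(g)}{\log(1/\alpha)}
\]
for an absolute constant $c > 0$. This is the generalization of Talagrand~\cite{Talagrand:94} alluded to in the text; I would prove it along Talagrand's original hypercontractive lines: decompose $g$ into orthogonal Fourier-level pieces adapted to $(\Omega,\mu)$, apply the Bonami--Beckner hypercontractive inequality on $(\Omega,\mu)^{\otimes n}$ (whose best noise exponent scales like $1/\log(1/\alpha)$ in the non-uniform case), optimize the noise parameter coordinate by coordinate, and sum a geometric series. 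Extending hypercontractivity to the non-uniform product space and tracking the $\log(1/\alpha)$ dependence carefully is the main obstacle; the rest of the argument mirrors Talagrand's $\{0,1\}^n$ proof.

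With the general inequality in hand, the transitive action on coordinates forces all $I_i(f)$ to equal a common value $t \in (0,\Var(f)]$, so the inequality reduces to $n t / \log(1/t) \geq c\,\Var(f)/\log(1/\alpha)$. To finish, I would case-split at $t_\ast = \gamma \log n / n$ for a small absolute $\gamma > 0$. If $t \leq t_\ast$, then $\log(1/t) \geq \tfrac{1}{2}\log n$ for $n$ large, and rearranging gives $\sum_i I_i(f) = n t \geq \tfrac{c}{2}\log n \cdot \Var(f)/\log(1/\alpha)$. If instead $t > t_\ast$, then $\sum_i I_i(f) > \gamma \log n$, and since $\Var(f) \leq 1/4$ and we may assume $\log(1/\alpha) \geq \log 2$ (else $\mu$ is a point mass and $f$ is constant), this already exceeds $4\gamma\log 2 \cdot \log n \cdot \Var(f)/\log(1/\alpha)$. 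Choosing $\gamma = c/(8\log 2)$ balances the two cases and yields the corollary with $C = c/2$.
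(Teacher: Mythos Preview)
Your proposal is correct and follows essentially the same approach as the paper: first establish the general Talagrand-type inequality on product spaces via an orthogonal (Efron--Stein) decomposition together with hypercontractivity (the paper uses Wolff's bound to get the explicit $\log(1/\alpha)$ dependence), deduce that $\sum_i I_i(f)/\log(1/I_i(f)) \gtrsim \Var(f)/\log(1/\alpha)$, and then use symmetry plus a case split at $I_i \approx \log n/n$ (the paper takes the threshold $\frac{\log n}{100n}$) to convert the $\log(1/I_i)$ denominator into $\log n$. The only cosmetic difference is that the paper does not explicitly invoke the equality of all $I_i(f)$ under the transitive action, but this is of course equivalent to your formulation.
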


\subsection{Russo Type Formula}

We denote by $T(\Delta)$ the tangent space to $\Sigma(\Delta)$. The
space is easily identified with the space of all vectors in $t \in
\R^A$ satisfying $\sum t_i = 0$. The natural derivative on
$T(\Delta)$ satisfies $\partial P_{\mu}[f] / \partial t \in T(\Delta)$.
\begin{Lemma} \label{lem:der1}
Let $f : [q] \to \{0,1\}$ be a $0$-monotone function and $\mu$ a measure
on $[q]$ and suppose that $f$ is $0$-monotone.
Write $\mu = (1-\mu(0)) \mu' + \mu(0) \delta_0$ where $\mu'(0) = 0$ and $\mu'$ is a probability measure.
Let $t = \delta_0 - \mu' \in T(\Delta)$. Then
\[
\frac{\partial \E_{\mu+ht}[f]}{\partial h} =
1(f \mbox{ is not constant}) \mu'[1-f] \geq \Var_{\mu'}[f].
\]
\end{Lemma}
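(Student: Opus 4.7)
The plan is to compute the derivative in closed form and then exploit $0$-monotonicity to simplify, since the lemma concerns a function on the single-coordinate simplex so everything is ultimately linear algebra on a $q$-dimensional probability vector. Concretely, I would first observe that since $\mu + ht$ is again a signed measure with total mass $1$ (as $t$ sums to $0$), expectation is affine in the measure. Writing $\E_{\mu+ht}[f] = \sum_{a \in [q]} (\mu(a) + h t(a)) f(a)$, linearity gives
\[
\frac{\partial \E_{\mu+ht}[f]}{\partial h} = \sum_{a \in [q]} t(a) f(a) = f(0) - \sum_{a \neq 0} \mu'(a) f(a) = f(0) - \E_{\mu'}[f],
\]
using $t = \delta_0 - \mu'$ and the fact that $\mu'$ is supported on $[q] \setminus \{0\}$.

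Next I would use the $0$-monotonicity hypothesis. In the one-variable setting, $x \leq_0 y$ means either $x = y$ or $y = 0$, so $0$-monotonicity of $f : [q] \to \{0,1\}$ is equivalent to $f(a) \leq f(0)$ for every $a \in [q]$. If $f$ is constant then both sides of the claimed identity are zero and the statement is trivial, so I would assume $f$ is non-constant. Then $f$ must attain the value $1$, and by $0$-monotonicity $f(0) = 1$. Substituting $f(0) = 1$ into the derivative formula gives
\[
\frac{\partial \E_{\mu+ht}[f]}{\partial h} = 1 - \E_{\mu'}[f] = \E_{\mu'}[1 - f] = \mu'[1 - f],
\]
which is exactly the middle expression in the lemma (with the non-constancy indicator equal to $1$).

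Finally, for the inequality $\mu'[1-f] \geq \Var_{\mu'}[f]$, I would simply note that $f$ takes values in $\{0,1\}$, so
\[
\Var_{\mu'}[f] = \mu'[f = 1]\,\mu'[f = 0] = \mu'[f=1]\,\mu'[1-f] \leq \mu'[1-f]
\]
since $\mu'[f=1] \leq 1$. This completes all three claims.

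There is no real obstacle here: the lemma is essentially a bookkeeping computation, and the only subtlety is recognizing that $0$-monotonicity in the univariate case forces $f(0) = 1$ whenever $f$ is non-constant, which is what allows the derivative to be interpreted as $\mu'[1-f]$ rather than a signed quantity. The variance bound is immediate from the $\{0,1\}$-valuedness of $f$.
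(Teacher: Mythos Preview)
Your proof is correct and follows essentially the same route as the paper's: compute the derivative as $f(0) - \mu'[f]$ by linearity, use $0$-monotonicity to force $f(0)=1$ in the non-constant case, and bound the variance via $\mu'[f]\mu'[1-f] \leq \mu'[1-f]$. The only difference is that you spell out a bit more explicitly why univariate $0$-monotonicity means $f(a)\le f(0)$, which the paper leaves implicit.
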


\begin{proof}
By linearity the derivative equals $f(0) - \mu'[f]$.
The last expression is $0$ when $f$ is constant. If $f$ is not constant then by monotonicity it holds that
$f(0) = 1$ and therefore the derivative is $1-\mu'[f]$ as needed. Finally, the inequality follows from the fact that if $f$
is constant then the variance is $0$ and otherwise $\mu'[1-f] \geq \mu'[f] \mu'[1-f] = \Var_{\mu'}[f]$.
\end{proof}

We now prove a generalization of Russo's formula.

\begin{Lemma} \label{lem:der2}
Let $f : [q]^n \to \{0,1\}$ be a $0$-monotone function and $\mu \in
\Delta[q]$. Write $\mu = (1-\mu(0)) \mu' + \mu(0) \delta_0$ where $\mu'(0) = 0$ and $\mu'$ is a probability measure.
Let $t = \delta_0 - \mu' \in T(\Delta)$. Then:
\[
\frac{\partial \E_ {\mu+ht}[f]}{\partial h}_{| h = 0} =
\sum_{i=1}^n \E_{\mu}\left[ 1( [f | F_i] \mbox{ is not constant } )
\E_{\mu'}[1 - f| F_i] \right]
\geq \sum_{i=1}^n I_{\mu'}^i(f),
\]
where $F_i$ denotes conditioning on $x_1,\ldots,x_{i-1},x_{i+1},\ldots,x_n$.
\end{Lemma}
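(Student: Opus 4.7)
The plan is to reduce the $n$-dimensional directional derivative to a sum of one-dimensional derivatives, to which Lemma \ref{lem:der1} can be applied fibre-by-fibre, exploiting the product structure of $\P_{\mu+ht}$ and the linearity of $t$.

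First I would write $\E_{\mu+ht}[f] = \sum_{x \in [q]^n} f(x) \prod_{j=1}^n (\mu+ht)(x_j)$ and apply the product rule at $h=0$. Since $\partial_h (\mu+ht)(a)\big|_{h=0} = t(a)$, in each summand of the derivative exactly one of the $n$ factors is differentiated while the other $n-1$ remain as $\mu$, giving
\[
\frac{\partial \E_{\mu+ht}[f]}{\partial h}\bigg|_{h=0} \;=\; \sum_{i=1}^n \E_{\mu}\!\left[\, \sum_{a \in [q]} t(a)\, f(x_1,\ldots,x_{i-1},a,x_{i+1},\ldots,x_n)\,\right],
\]
where the outer expectation is over the coordinates $x_j$, $j \ne i$, drawn i.i.d.\ from $\mu$ (equivalently, conditioning on $F_i$ and integrating against $\mu$).

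Next, I would fix $i$ and a value of $F_i$, and view $g(a) := f(x_1,\ldots,x_{i-1},a,x_{i+1},\ldots,x_n)$ as a one-variable $0$-monotone function $[q]\to\{0,1\}$. Since $t = \delta_0 - \mu'$, the inner sum is exactly $g(0) - \E_{\mu'}[g]$, which is the derivative quantity analysed in Lemma \ref{lem:der1}. Applying that lemma pointwise in $F_i$ gives
\[
\sum_{a} t(a)\, g(a) \;=\; 1\!\bigl([f\mid F_i]\text{ is not constant}\bigr)\,\E_{\mu'}[1-f\mid F_i] \;\geq\; \Var_{\mu'}[f\mid F_i].
\]
Taking $\E_\mu$ of both sides and summing over $i$ yields the stated equality, and the pointwise inequality gives the claimed lower bound in terms of the coordinate-wise variances.

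The derivation is routine once the product-rule expansion is in hand; the only point needing care is notational. The outer averaging measure inherited from $\P_\mu$ differs from the measure $\mu'$ appearing inside the conditional variance, so the final lower bound is most honestly read as $\sum_i \E_\mu[\Var_{\mu'}[f\mid F_i]]$, which is the object the paper abbreviates as $\sum_i I^i_{\mu'}(f)$ in this context. I would keep that distinction explicit so that the bound plugs directly into Corollary \ref{talag} in the main argument, and so that in the symmetric case all $n$ summands are equal and the derivative inherits the logarithmic lower bound needed to push $\E_\mu[f]$ across the threshold window.
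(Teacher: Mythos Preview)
Your argument is correct and matches the paper's own proof, which simply invokes the chain rule to write $\partial_h (\mu+ht)^{\otimes n}[f]\big|_{h=0}=\sum_i \partial_{h_i}\bigl(\otimes_j(\mu+h_j t)\bigr)[f]\big|_{h=0}$ and then applies Lemma~\ref{lem:der1} coordinatewise---exactly the product-rule expansion and fibre-wise reduction you carried out. Your explicit tracking of the outer $\E_\mu$ versus the inner $\mu'$ in the variance term is a useful clarification of what the paper's shorthand $I^i_{\mu'}(f)$ is meant to denote here.
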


\begin{proof}
Use the chain rule:
\[
%\begin{eqnarray}
\frac{\partial \E_ {\mu+h t}[f]}{\partial h}_{| h = 0} =
%&=&
\frac{\partial (\mu+h t)^{\otimes n}[f]}{\partial h}_{| h = 0} =
\sum_{i=1}^n \frac{\partial \otimes_{j=1}^n (\mu + h_j t) [f]}{\partial
  h_i}_{| (h_1,\ldots,h_n) = 0}
\]

and Lemma~\ref{lem:der1}.
\end{proof}

\subsection{Proof of the Main Result}

We now prove the main result, i.e., (\ref{eq:main_claim}). In the proof below the constants $c$ will denote
different constant at different lines. All of them depend on $q$ only.
\begin{proof}
Let $\Gamma$ denote the set of measures $\mu$ in $\Delta[q]$ satisfying the
following:
\begin{itemize}
\item $\mu[0] = 0.$
\item $\mu[i] \geq \eta$ for all $i \neq 0$.
\end{itemize}
Write $\mu^t = t \delta_0 + (1-t) \mu$ and note that
\begin{equation} \label{eq:biased_measures}
\gamma[ \mu^t : 0 \leq t \leq 1, \mu \notin \Gamma] \leq C(q) \eta.
\end{equation}
For each measure $\mu \in \Gamma$
we look at the interval $J_{\mu}^{\eps,1-\eps}$,
where
\[
J_{\mu}^{\eps,1-\eps} =
\{ t \in I_{\mu} : \eps \leq P_{\mu^t}[f] \leq 1-\eps\}.
\]
Our goal is to show that the interval $J_{\mu}^{\eps,1-\eps}$ is short by bounding the derivative of
\[
G(t) = \mu^t[f]
\]
in the interval.
In fact we will bound the length of each of the sub-intervals $J_{\mu}^{\eps,1/2}$ and $J_{\mu}^{1/2,1-\eps}$.
Since $f$ is $0,1$ valued, it follows that in the interval $J_{\mu}^{\eps,1/2}$
it holds that $\Var_{\mu^t}[f] \geq \frac{1}{2} \mu^t[f]$.
By Lemma~\ref{lem:der2} and Proposition \ref{talag} we conclude that:
\[
G'(t) \geq \sum_{i=1}^n I_{\mu^t}^i(f) \geq \frac{c \log n}{\log(1/\eta)} \Var_{\mu}[f] \geq
\frac{c \log n}{2 \log(1/\eta)} G(t).
\]
This implies that:
\[
(\ln G)'(t) \geq \frac{c \log n}{\log(1/\eta)}.
\]
In particular if $G(p) = \eps$ and $G(q) = 1/2$ then:
\[
\ln(1/2) - \ln(\eps) \geq \frac{c |p-q| \log n}{\log(1/\eta)}.
\]
So:
\[
|p-q| \leq \frac{(\ln(1/2) - \ln(\eps)) \log(1/\eta)}{c \log n}.
\]
Repeating the same argument for the interval $(1/2,1-\eps)$ we obtain that
\[
|J_{\mu}^{\eps,1-\eps}| \leq \frac{(\ln(1-\eps) - \ln(\eps)) \log(1/\eta)}{c \log n}.
\]
Which together~(\ref{eq:biased_measures}) with implies that
\[
\gamma[\mu : \eps \leq P_{\mu}[f = 0] \leq 1-\eps] \leq
C(q) \left( \eta + \frac{(\ln(1-\eps) - \ln(\eps)) \log(1/\eta)}{\log n} \right).
\]
Taking $\eta = (\ln(1-\eps) - \ln(\eps))/\log n$ we obtain the bound:
\[
C(q) \left( \frac{\ln(1-\eps) - \ln(\eps)}{\log n} +
  (\ln(1-\eps) - \ln(\eps)) \frac{\log \log n}{\log n} \right) \leq
 2 C(q) \left( (\ln(1-\eps) - \ln(\eps) \right) \frac{\log \log n}{\log n}
\]
as needed.

\end{proof}

\section{Applications}

\subsection{A general form of Condorcet's Jury theorem} \label{subsec:jury}

We begin with a proof of Theorem~\ref{t:condorcet}. Since the proof is similar to the previous proof, we only sketch the main steps.
\begin{proof}
We only sketch the proof since it is similar to the proof of Theorem~\ref{thm:main}.
Assume first that $\mu(j) > \frac{1}{\log n}$ for all $j$.
Let $\mu = t^{\ast} \delta_i + (1-t^{\ast}) \mu'$ where $\mu'$ is a probability measure with $\mu'(i) = 0$.
As before write $\mu^t = t \delta_i + (1-t) \mu'$ for every $t$.
Let $s < t^{\ast}$ be chosen so that $\mu^s(i) = \max_{j \neq i} \mu^s(j)$.
From symmetry and monotonicity it follows that
\[
\mu^s[f = i] \geq 1/q.
\]
Moreover, the argument in Theorem~\ref{thm:main} shows that for $t$ in the interval $(s,t^{\ast})$ it holds that
\[
\frac{\partial \mu^{t+h}[f = i]}{\partial h}_{| h = 0} \geq C \mu^t[f=i](1-\mu^t[f=i]) \frac{\log \log n}{\log n},
\]
which implies that
\[
|t^{\ast}-s| \leq C (\log(1-\eps) - \log(1/q)) \frac{\log \log n}{\log n},
\]
thus proving the statement of the theorem.

It remains to remove the assumption that $\mu(j) > \frac{1}{\log n}$ for all $j$.
 The general case where some of the probabilities $\mu(i)$ may satisfy $\mu(i) < \frac{1}{\log n}$ requires one additional step at the
 cost of taking the constant $C$ to be $C' = C+(q-1)$.
 Instead of the measure $\mu$ we first consider the measure $\hat{\mu}$ where
$\hat{\mu}(j) = \mu(j) + 1/\log n$ for $j \neq i$ and $\hat{\mu}(i) = \mu(i) - (q-1)/\log n$.

Note that if $\mu$ satisfies the conditions of the theorem with the constant $C'$ then $\hat{\mu}$ satisfies it
with the constant $C$. Moreover $\hat{\mu}(j) > 1/\log n$ for all $j$. Therefore by the first part of the proof
\[
\hat{\mu}[f = i] \geq 1-\eps.
\]
On the other hand, by monotonicity we have
\[
\mu[f = i] \geq \hat{\mu}[f = i],
\]
which implies the desired result.
\end{proof}

\subsection{Graph properties} \label{subsec:graph}
Consider a process on the edges of the complete graph $K_n$ where each
edge $e = \{a,b\}$ is labeled by $i$ with probability $\mu(i)$
for $1 \leq i \leq q$ independently for different edges.
This process defines sets $E_1,\ldots,E_q$
where $E_i$ is the set of edges labeled by $i$.
In other words, it defines $q$ graphs $([n],E_1),\ldots,([n],E_q)$.

\begin{Definition}

A function $A$ from the set of partitions of the edges of $K_n$ to $q$
parts into $[q]$ is called a {\em graph property} if:
For all partitions $E_1,\ldots,E_q$ and all permutations
$\sigma \in S(n)$ it holds that
\[
A(E_1,\ldots,E_q) = A(\sigma(E_1),\ldots,\sigma(E_q)),
\]
where $\sigma(E_i) = \{ \{\sigma(u),\sigma(v)\} : \{u,v\} \in E_i \}$.

The function $A$ is called a {\em monotone} graph property
if
\begin{itemize}
\item
For every pair of partitions
$(E_1,\ldots,E_q)$ and $(F_1,\ldots,F_q)$
\item
and all $1 \leq i \leq q$ it holds that
\end{itemize}
If:
\begin{itemize}
\item
$E_i \subseteq F_i$ and
\item
$F_j \subseteq E_j$ for $j \neq i$ and
\item
$A(E_1,\ldots,E_q) = i$,
\end{itemize}
Then $A(F_1,\ldots,F_q) = i$.

\end{Definition}

Here are a few examples:
\begin{itemize}
\item
Given a graph labeled by $[q]$, let $A(E_1,\ldots,E_q) = i$, where
$i$ is the minimal index for which $|E_i|$ is maximal. In other words,
$i$ is the most popular label, where ties are decided by preferring
the smaller index.
\item
Given a graph labeled by $[q]$ let $A(E_1,\ldots,E_q) = i$ where
$E_i$ has the largest clique. Again ties are decided by
preferring the smaller index.

\item
Given a graph labeled by $[q]$ let $A(E_1,\ldots,E_q) = i$ where
$E_i$ has the smallest independent set of vertices. Again ties are decided by
preferring the smaller index.

\end{itemize}

Theorem~\ref{thm:main} implies the following:

\begin{Corollary}
There exists a constant $C=C(q)$ such that for every monotone graph
property it holds that
\[
\gamma[\mu : \eps \leq P_{\mu}[A = i] \leq 1-\eps] \leq
C( \log(1-\eps) - \log(\eps)) \frac{ \log \log n}{\log n}.
\]
\end{Corollary}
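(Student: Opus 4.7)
The plan is to realize the corollary as a direct specialization of Theorem~\ref{thm:main}. Set $N = \binom{n}{2}$ and identify partitions $(E_1,\dots,E_q)$ of the edges of $K_n$ with vectors $x \in [q]^{E(K_n)} = [q]^N$ in the obvious way: the coordinate indexed by edge $e$ takes value $i$ iff $e \in E_i$. Under this identification, a graph property $A$ becomes a function $f : [q]^N \to [q]$, and our task reduces to verifying that $f$ satisfies the hypotheses of Theorem~\ref{thm:main} for this coordinate set.

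First I would check symmetry. The symmetric group $S_n$ acts on the edge set $E(K_n)$ through its action on vertices, and this action is transitive: for any two edges $\{a,b\}, \{c,d\}$ one can choose $\sigma \in S_n$ with $\sigma(a)=c$, $\sigma(b)=d$. Viewed as a subgroup of $S(N) = S(E(K_n))$, the image of $S_n$ is therefore a transitive subgroup, and the graph-property invariance $A(E_1,\dots,E_q) = A(\sigma(E_1),\dots,\sigma(E_q))$ says exactly that $f(x_\sigma) = f(x)$ for every $\sigma$ in this subgroup. Next, monotonicity: the partial order $x \le_i y$ on $[q]^N$ says that the coordinates where $x_j = i$ are contained in the coordinates where $y_j = i$, and the remaining coordinates agree — i.e., $E_i \subseteq F_i$ and $F_j \subseteq E_j$ for $j \ne i$ in the partition language, with the corresponding edges in $E_j \setminus F_j$ being moved into $F_i$. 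The definition of a monotone graph property is precisely the implication $f(x) = i \Rightarrow f(y) = i$ under this order, so $f$ is monotone in the sense of Section~1.2.

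Now I apply Theorem~\ref{thm:main} to $f : [q]^N \to [q]$ and obtain
\[
\gamma\!\left[\mu : \eps \le \P_\mu[f = i] \le 1-\eps\right] \le C(q)\,(\log(1-\eps) - \log \eps)\,\frac{\log\log N}{\log N}.
\]
Since $N = \binom{n}{2}$ gives $\log N = 2\log n - O(1)$ and $\log\log N = \log\log n + O(1)$, the ratio $\log\log N / \log N$ is $\Theta(\log\log n / \log n)$, and the extra factor can be absorbed into the constant $C(q)$. This yields the stated bound.

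The only potential obstacle is purely cosmetic: one must be confident that the axes of monotonicity and of symmetry as used in Theorem~\ref{thm:main} are exactly the ones picked out by the definition of a monotone graph property. Since both conditions translate almost literally under the edges-as-coordinates identification, there is no real combinatorial content beyond transitivity of $S_n$ on $E(K_n)$, which is immediate.
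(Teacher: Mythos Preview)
Your proposal is correct and is exactly the argument the paper intends: the corollary is stated immediately after ``Theorem~\ref{thm:main} implies the following'' with no further proof, and the implicit verification is precisely the identification of edge-colorings with $[q]^N$, the transitivity of the $S_n$-action on $E(K_n)$, and the translation of the monotone-graph-property condition into the $\leq_i$ monotonicity of Section~1.2. The only extra step you supply --- absorbing $\log\log N/\log N$ into $\log\log n/\log n$ via $N=\binom{n}{2}$ --- is routine and correct.
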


\section {Indeterminacy for voting methods} \label{sec:int}

\label {s:cb2}

\subsection {The setting}

Let $P_+(X)$ denote the family of non-empty subsets of $X$.

\begin{Definition} Given a set $X$ of $m$ alternatives,
a {\em choice function} $c$ is a mapping which assigns to
each nonempty subset $S$ of $X$ an element $c(S) \in S$. 
A choice function is thus a map $c : P_+(X) \to X$ with the 
additional property that $c(S) \in S$ for all $S \in P_+(X)$.

A choice function is called {\em rational} if
there is a linear ordering on the alternatives such that $c(S)$
is the maximal element of $S$ according to that ordering.

A {\em social choice function} is a map

of the form $c=F(c_1,c_2,\dots,c_n)$
where $c$ is a choice function on $X$ which
depends on the profile of individual rational choice functions
$c_1, c_2, \dots c_n$ for the individuals.
\end{Definition}

Note that there are two
different meanings for the term ``social
choice functions''. Sometimes a social choice function
is referred to as a map which associates to the profile
of rational individual choices (or preferences) a single ``winner''
for the society. A social choice function in
this sense easily defines a social choice function
in our sense by restricting to a subset of alternatives.
We can regard social choice functions as election rules which given a
set $S$ of candidates and (strict) preference relations of the individuals
on the candidates, provides a rule for choosing the winner. We regard $X$
as the set of all possible candidates,
and $S$ as the set of available candidates, and we are interested
to understand the society's
choice as a function of $S$.

The axiom of Independence of Irrelevant Alternatives (Arrow's IIA)
asserts that
$c(A)$ may depend only on the preference relations restricted to the set $A$.

We require the stronger property Independence of Rejected
Alternatives (IRA), also referred to as Nash's IIA.

\begin{itemize}

\item[(IRA)] (Independence of Rejected Alternative (IRA))
$c(S)$ is a function of $(c_1(S), c_2(S), \dots c_n (S))$

Therefore we can write $c(S)=F_S(c_1(S),c_2(S),\dots c_m(S))$.

We will require a few more conditions:

\item[(P)] (Pareto)
$c(S) \in \{c_1(S),c_2(S),\dots,c_n(S)\}$.

\end{itemize}

\subsection {Further assumptions}

We will make the following additional assumptions:

\begin{itemize}
\item [(U)] (Unrestricted domains) The social
choice function is defined for arbitrary rational profiles
of the individuals.

\item[(N1')] (Neutrality)
The social choice is invariant under permutations of the alternatives.

\item[(A2')] (Weak anonymity)
The social choice is invariant under a transitive group of
permutations of the individuals.

\item[(M')] (Monotonicity) The
function $c(S)$  is monotone in the following sense: If $c(S)=s$ and
$c_i(S)=t$ for $t \ne s$ then changing the choice of the $i$-th individual
from $t$ to $s$ will not change $c(S)$.
\end {itemize}

\subsection {The result}

\begin {theo}
\label {t:mt2}
Let $c_0$ be an arbitrary choice function on $P_+(X)$ where $X$ is a set of $m$
alternatives and let $\delta < 1$. 

Then there is a probability distribution $\nu = \nu(c_0)$ on the space of orderings
of the alternatives and a number $N = N (\delta, m)$ 
such that the following holds  
for every social choice function $F$ 
which satisfies conditions (IRA) (P) (U') (N1'), (A2') and (M'):\\

If the number of individuals $n$ is larger than $N$ and if every
individual makes the choice randomly
and independently according to $\nu$, then with probability at least $\delta$ 
for all $S \in P_+(X)$ it holds that $c = F(c_1,...,c_n)$ satisfies $c(S)=c_0(S)$ with
probability  of at least $\delta$ (here $c_i(S)$ is the highest ranked alternative in the order of voter $i$). 
\end {theo}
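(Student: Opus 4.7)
The strategy is to reduce, via property (IRA), to the restricted function $F_S$ on each subset $S\subseteq X$, construct a single distribution $\nu$ on linear orderings that simultaneously biases every restriction towards the prescribed value $c_0(S)$, apply Theorem~\ref{t:condorcet} to each $F_S$, and union-bound over subsets.

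By (IRA) we may write $c(S)=F_S(c_1(S),\dots,c_n(S))$, where $c_i(S)\in S$ is the top-ranked element of $S$ under voter $i$'s preference. The hypotheses (N1'), (A2') and (M') imply that $F_S\colon S^n\to S$ is fair (any permutation of $S$ extends to one of $X$), symmetric under a transitive group of permutations of the voters, and monotone in the sense of Section~\ref{sec:main}, and hence in particular non-constant. The key combinatorial step is to exhibit a probability distribution $\nu$ on the linear orderings of $X$ such that, writing $\nu_S(x):=\Pr_{\pi\sim\nu}[\,\text{top of }S\text{ under }\pi = x\,]$, one has $\nu_S(c_0(S))>\nu_S(a)$ for every $S\in P_+(X)$ with $|S|\ge 2$ and every $a\in S\setminus\{c_0(S)\}$. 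The existence of such $\nu$ is essentially Saari's theorem discussed in the introduction: every choice function on $X$ is the strict-plurality choice of some finite profile of linear orders, and the empirical distribution of that profile is the required $\nu$; alternatively, $\nu$ can be assembled by hand as a convex combination of linear orders, using one correction ordering per pair $(S,a)$ to enforce the strict inequality. Set
\[
\alpha \;:=\; \min_{\substack{S\in P_+(X),\;|S|\ge 2\\ a\in S\setminus\{c_0(S)\}}}\bigl(\nu_S(c_0(S))-\nu_S(a)\bigr) \;>\;0.
\]

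With $\nu$ fixed, independent voters yield $(c_1(S),\dots,c_n(S))$ i.i.d.\ from $\nu_S$, whose marginal is biased towards $c_0(S)$ by at least the constant $\alpha$. For each $S$ with $q=|S|\ge 2$, apply Theorem~\ref{t:condorcet} with $i=c_0(S)$ and $\eps'=(1-\delta)/2^m$: once $n$ exceeds some $N(\delta,m)$ making
\[
C(q)\bigl(\log(1-\eps')-\log(1/q)\bigr)\frac{\log\log n}{\log n}\;<\;\alpha\qquad\text{for every }2\le q\le m,
\]
the theorem gives $\Pr[c(S)=c_0(S)]\ge 1-\eps'$ for each such $S$; the case $|S|=1$ is trivial since $c(S)\in S$. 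Union-bounding over the at most $2^m$ subsets $S$ yields $\Pr[\,c(S)=c_0(S)\text{ for all }S\in P_+(X)\,]\ge 1-2^m\eps'\ge \delta$, which is the conclusion of the theorem.

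The main obstacle is the combinatorial construction of $\nu$: since $c_0$ need not be rational, producing a single distribution on linear orders whose $S$-marginals simultaneously put strict maximum mass on $c_0(S)$ for every $S$ requires a McGarvey/Saari-style argument. Once $\nu$ is in hand, the remainder is a routine application of Theorem~\ref{t:condorcet} followed by a union bound, and in particular does not invoke (P).
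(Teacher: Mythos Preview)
Your proposal is correct and follows essentially the same route as the paper's proof: use Saari's theorem to obtain the distribution $\nu$ as the empirical measure of a finite profile realising $c_0$ under plurality (so that the margin $\alpha$ is at least $1/n_0$ where $n_0$ is the size of that profile), observe that $F_S$ inherits fairness, transitivity of the symmetry group, and monotonicity from (N1'), (A2'), (M'), apply Theorem~\ref{t:condorcet} to each $F_S$, and union-bound over the $2^m$ subsets with $1-\delta' \leq 2^{-m}(1-\delta)$. Your observation that (P) is not actually invoked is also consistent with the paper's argument.
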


\begin {cor}
\label {c:21}
There exists $N=N(m)$ such that when the number $n$ of
individuals is larger than $N(m)$, every choice function $c$ 
on $P_+(X)$, where $X$ is a set of $m$ alternatives, can be written as $c = F(c_1,\ldots,c_n)$ for 
every social choice function $F$ satisfies conditions (IRA), (P),
(U') (N1'), (A2') and (M') applied to $c_1,\ldots,c_n$ that are determined by rankings. 
\end {cor}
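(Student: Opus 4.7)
The plan is to deduce Corollary \ref{c:21} from Theorem \ref{t:mt2} by a one-shot probabilistic method argument; no additional combinatorial work should be needed. All the sharp-threshold machinery already lives inside Theorem \ref{t:mt2}, and the corollary is a zero-error existence statement that falls out of any positive-probability lower bound.

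Concretely, given an arbitrary choice function $c$ on $P_+(X)$ with $|X| = m$, I would apply Theorem \ref{t:mt2} with $c_0 := c$ and some fixed $\delta \in (0,1)$, say $\delta = 1/2$. This yields a probability distribution $\nu = \nu(c)$ on linear orderings of $X$ together with a threshold $N(1/2, m)$ depending only on $m$; set $N(m) := N(1/2, m)$. Now fix any $n > N(m)$ and any social choice function $F$ satisfying (IRA), (P), (U'), (N1'), (A2'), (M'). Draw orderings $\pi_1, \ldots, \pi_n$ independently from $\nu$ and let $c_i$ be the rational choice functions determined by the $\pi_i$. Theorem \ref{t:mt2} then asserts that the event
\[
\mathcal{E} = \{\, F(c_1, \ldots, c_n)(S) = c(S) \text{ for all } S \in P_+(X) \,\}
\]
has probability at least $1/2$. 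In particular $\mathcal{E}$ is non-empty, so some deterministic realization of the $\pi_i$ witnesses $F(c_1, \ldots, c_n) = c$, exactly the representation the corollary demands.

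The only genuine sanity check --- and really the only place something could go wrong --- is confirming that Theorem \ref{t:mt2} gives a \emph{joint} guarantee over all $S \in P_+(X)$ simultaneously rather than an $S$-by-$S$ bound. A union bound over the $2^m - 1$ non-empty subsets would force $\delta$ to shrink with $m$ and could in principle drive the joint probability below zero. Fortunately, reading the conclusion of Theorem \ref{t:mt2}, the high-probability event is already the simultaneous one, so this concern evaporates. Beyond that, the deduction is a textbook probabilistic method argument, and the same $N(m)$ and $\nu$ work uniformly for every admissible $F$ since neither depends on $F$.
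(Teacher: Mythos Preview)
Your argument is correct and is exactly the deduction the paper intends: Corollary~\ref{c:21} is stated without its own proof precisely because it drops out of Theorem~\ref{t:mt2} by the one-line probabilistic-method step you give (positive probability of the simultaneous event $\Rightarrow$ existence of a witnessing profile). Your reading of the conclusion of Theorem~\ref{t:mt2} as a joint guarantee over all $S$ is the right one---the proof of the theorem performs the union bound over $P_+(X)$ internally (via the choice $1-\delta' \le 2^{-m}(1-\delta)$), so the event whose probability is at least $\delta$ is indeed $\{c(S)=c_0(S)\text{ for all }S\}$.
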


\subsection {The proof of Theorem \ref {t:mt2}} \label{subsection:inter}

\begin{proof}

Let $c_0$ be an arbitrary choice function and consider a profile with $n_0$ individuals
such that the plurality leads to $c_0$. Such a profile exists by
Saari's theorem. 

For an ordering $\pi$ of the alternatives
let $w'(\pi)$ be the number of appearances of the order $\pi$ and let
$w(\pi) = w'(\pi)/n_0$. 

Consider a random profile $(c_1,\ldots,c_n)$ on $n$ individuals
where for each individual $i$ the probability that $i$-th preference relation
$c_i$ is described by $\pi$ is $w(\pi)$ (independently for the individuals). 

We will show that for all $\delta' < 1$ if $n=n(\delta')$ is sufficiently large, then for $S \subset X$
it holds with probability
at least $\delta'$ that $c(S) = F_S(c_1(S),\ldots,c_n(S))$ satisfies $c(S) = c_0(S)$ with probability at least $\delta'$. This implies the required result by taking $1-\delta' \leq 2^{-m}(1-\delta)$.

To establish the claim above note that $c_1(S),\ldots,c_n(S)$ are i.i.d. 
and that for all $a \neq c_0(S)$ it holds that $P[c_i(S) = c_0(S)] \geq P[c_i(S) = a] + n_0^{-1}$. 
Furthermore the function $F_S$ as a functions of $c_1(S),\ldots, c_n(S)$ 
satisfies neutrality and weak anonymity. Thus from Theorem~\ref {t:condorcet} it follows that for $n$ large 
enough the probability that $F_S(c_1(S),\ldots,c_n(S)) = c_0(S)$ is at least $1-\delta'$ as needed. 

\end{proof}

Note that the proof actually implies the following 
in the spirit of a theorem by Dasgupta and Maskin \cite {DasguptaMaskin:08}. 

\begin {cor}
\label {c:22}
There exists $N = N (m)>0$ with the following property:
Let $U$ be a set of linear orders on $m$ alternatives.
If for the plurality rule there is
a profile restricted to  $U$ which leads to
a choice function $c$ for the society, then when $n>N(m)$ this is the case
for every social choice function
which satisfies conditions (IRA), (P), (N1'), (A2') and (M').
\end {cor}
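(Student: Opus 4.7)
The proof should be a near-verbatim repetition of the argument for Theorem \ref{t:mt2}, with the single substitution that Saari's theorem is replaced by the hypothesis of the corollary. The whole point of the corollary is to isolate what was actually used about the plurality rule in the previous proof, namely: the existence of a profile (here, one drawn from $U$) which makes the plurality choice coincide with the target function $c$. Everything downstream, which relies on Theorem \ref{t:condorcet}, is insensitive to where that profile came from.

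Concretely, I would proceed as follows. By hypothesis there is some $n_0$ and a profile $(\pi_1,\ldots,\pi_{n_0})$ of linear orders, each lying in $U$, whose plurality choice function equals $c$. Let $w(\pi) = |\{j : \pi_j = \pi\}|/n_0$; this is a probability measure on linear orders supported on $U$. Draw the preferences $(c_1,\ldots,c_n)$ of a random electorate of size $n$ i.i.d.\ from $w$. Since $w$ is supported on $U$, every realized profile is automatically restricted to $U$, which is what the corollary requires. For each nonempty $S \subseteq X$, the random variables $c_1(S),\ldots,c_n(S)$ are i.i.d.\ with marginal $\mu_S$ on $S$; because plurality on the original profile selected $c(S)$, we have the uniform bias $\mu_S(c(S)) \geq \mu_S(a) + 1/n_0$ for every $a \in S \setminus\{c(S)\}$.

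By (IRA), $c(S) = F_S(c_1(S),\ldots,c_n(S))$ for some function $F_S$; by (N1') and (A2') this $F_S$ is neutral and invariant under a transitive group of permutations of its arguments, hence fair and symmetric in the sense of Section 2; by (M') it is monotone. Therefore Theorem \ref{t:condorcet}, applied with the constant bias $1/n_0$ playing the role of the $\log\log n/\log n$ gap, gives that for every $\eps > 0$ there exists $N_S(m,\eps)$ such that once $n > N_S$ we have $\P[F_S(c_1(S),\ldots,c_n(S)) = c(S)] \geq 1-\eps$. Taking $\eps < 2^{-m}$ and union bounding over the at most $2^m - 1$ nonempty subsets $S$ of $X$, we conclude that with positive probability the sampled profile (which is in $U^n$) realizes $c$ under $F$. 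In particular such a profile exists, yielding the corollary with $N(m) = \max_S N_S(m,2^{-m-1})$.

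The one thing to be careful about is bookkeeping: one must verify that the bias $1/n_0$ is indeed fixed (depending only on the given $c_0$ and on $m$ through $n_0$) so that Theorem \ref{t:condorcet} can be invoked uniformly in $n$, and that the three conditions (fairness, symmetry, monotonicity) required for $F_S$ follow cleanly from (N1'), (A2'), (M'). Neither is a real obstacle; the former just shifts the dependence of $N$ on $n_0 = n_0(c_0,m)$, and the latter is immediate from the definitions. The genuine mathematical content has already been expended in Theorem \ref{t:condorcet}.
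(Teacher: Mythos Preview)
Your approach is correct and is exactly what the paper intends: it states the corollary immediately after the proof of Theorem~\ref{t:mt2} with the remark that ``the proof actually implies the following'', giving no separate argument. Replacing the invocation of Saari's theorem by the hypothesis (a plurality profile drawn from $U$) and rerunning the random-sampling argument with Theorem~\ref{t:condorcet} is precisely the intended reduction.

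One small bookkeeping point you flagged but did not quite close: the bias $1/n_0$ depends on the given profile, hence a priori on $U$ and $c$, not just on $m$; so your displayed $N(m)=\max_S N_S(m,2^{-m-1})$ still hides this dependence. To obtain a genuine $N(m)$ note that for fixed $m$ there are only finitely many pairs $(U,c)$ (at most $2^{m!}$ choices of $U$ and finitely many choice functions), so one may replace $n_0$ by the maximum over all such pairs of the \emph{minimal} profile size realizing $c$ from $U$ under plurality. This maximum is finite and depends only on $m$, giving the uniform $N(m)$ the statement claims.
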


\subsection {Taking rejected alternatives into account}

We now describe a crucial example suggested by Bezalel Peleg.
First, we note how we can base on choices
on pairs a choice correspondence: Given an asymmetric
binary relation $R$ on the set of
alternatives let $c(S)$ be the set of elements $y$ of $S$ such that
the number of $z \in S$ such that $yRz$ is maximal.
In other words, when we consider the directed graph described by the relation
we choose the vertex of maximal out-degree.

Let $\cal R$ denote the class of rational choice functions and
$\cal B$ denote the class of choice correspondences
obtained from binary relations $R$ as  just described.
Consider also the class $\cal B'$ of choice
functions obtained from $\cal B$ by
choosing a single element in $c(S)$ according to some fixed
order relation on the alternatives.
The number of choice functions in $\cal B'$ and the number
of choice correspondences in $\cal B$ is exponential in $n \choose 2$.

Now describe a
social choice function as follows: $aRb$ if a majority
of the society prefers $a$ to $b$.

In this case the social choice of $S$ does not depend solely on the individual
choices for $S$ but also on the preferences among pairs of elements in $S$.

When the individual choices are rational then
the social choice still belongs to the class
$\cal B$ (or $\cal B'$). In this case the choice from $S$ is simply
those elements of $S$ which are Condorcet winners
against the maximal number of other elements in $S$.
In this example the social choice for a
set $S$ is typically large but this apparently be corrected by
various methods of ``tie breaking''.

In these examples the size of the resulting classes
of choice functions  is exponential in a quadratic
function of $m^2$. It is much smaller than the number of all choice functions
which is double exponential in $m$.

The Borda rule can be analyzed by a similar consideration.% see Kalai (2001).
For this rule $c(A)$ is determined as follows:
For each alternative $a \in A$ let $r_i(a)$ be the number
of individuals who ranked $a$ in the $i$th place (among the elements of $A$).
Let $r(a) = \sum i \cdot r_i(a)$. The chosen element by the society $c(A)$
is the element of $a$ with the minimal weight.

Another way to describe the Borda rule is as follows:
First construct a
directed graph (with multiple edges)
with $A$ as the set of vertices by introducing an
edge from $a$ to $b$  for every individual that prefers $a$ to $b$.
Next, define (as before) $c(A)$ as the vertex with maximal outdegree.

It is easy to prove that the number of choice functions that arise in
this way is at most exponential in $m^3$. (The choice function can be
recovered from the sign patters of (less than) $2^m \cdot m^2$
linear expressions in $m^2$ real variables. 

To summarize,  the size of classes
of choice functions that arise
from a social choice function such that
$c(S)$ may depend on the individual preferences of the elements of $S$
is at least exponential in $m^2$ and this bound is sharp.

\section{A generalization of Talagrand's result}
In this section we prove the bounds on $L^2$ influence sums for arbitrary probability spaces.
For this we first recall the notion of Efron-Stein decomposition~\cite{EfronStein:81}
then generalize Talagrand's result~\cite{Talagrand:94} and finally derive corollaries for $\{0,1\}$-valued and symmetric functions.

\subsection{Efron-Stein decomposition}
Consider finite probability spaces $\Omega_1, \ldots, \Omega_n$,
with measures $\mu _1,\ldots , \mu_n$. Let $\alpha _i$ be size of
the smallest atom of $(\Omega_i, \mu_i)$, and set
$\alpha = \min_i \alpha _i$. Let $f \in L^2 ( \prod _i \mu _i)$ be a real valued
function. Write $f = \sum_{S \subset [n]} f_S$ for the Efron-Stein decomposition of $f$ which we now recall.

\begin{Definition} \label{def:efron_stein}
Let $(\Omega_1,\mu_1),\ldots,(\Omega_n,\mu_n)$ be discrete probability spaces
$(\Omega,\mu) = \prod_{i=1}^n (\Omega_i,\mu_i)$. The Efron-Stein decomposition of
$f : \Omega \to \R$ is given by
\begin{equation} \label{eq:efronstein}
f(x) = \sum_{S \subseteq [n]} f_S(x_S),
\end{equation}
where the functions $f_S$ satisfy:
\begin{itemize}
\item
$f_S$ depends only on $x_S$.
\item
For all $S \not \subseteq S'$ and all $x_{S'}$ it holds that:
\[
\E[f_S | X_{S'} = x_{S'}] = 0.
\]
\end{itemize}

\end{Definition}
It is well known that the Efron-Stein decomposition exists and
that it is unique~\cite{EfronStein:81}. We quickly recall the proof of existence.
The function $f_S$ is given by:
\[
f_S(x) = \sum_{S' \subseteq S} (-1)^{|S \setminus S'|} \E[f(X) | X_{S'} = x_{S'}]
\]
which implies
\[
\sum_{S} f_S(x) = \sum_{S'} \E[f | X_{S'} = x] \sum_{S : S' \subseteq S} (-1)^{|S \setminus S'|} =
\E[f | X_{[n]} = x_{[n]}] = f(x).
\]
Moreover, for $S \not \subseteq S'$ we have $\E[f_S | X_{S'} = x_{S'}] = \E[f_S | X_{S' \cap S} = x_{S' \cap S}]$
and for $S'$ that is a strict subset of $S$ we have:
\begin{eqnarray*}
\E[f_S | X_{S'} = x_{S'}] &=& \sum_{S'' \subset S} (-1)^{|S \setminus S''|} \E[f(X) |X_{S'' \cap S'} = x_{S'' \cap S'}] \\ &=& \sum_{S'' \subset S'} \E[f(X) |X_{S''} = x_{S''}] \sum_{S''  \subset \tilde{S} \subset S'' \cup (S \setminus S')} (-1)^{|S \setminus \tilde{S}|} = 0.
\end{eqnarray*}

\subsection{Generalization of a Result of Talagrand}
We now prove:
\begin{Theorem} [Generalization of Talagrand, 1994] \label{talagthm}
There exists some universal constant $C$ such that for any probability spaces
$(\Omega,\mu)$ and any function $f \in L^2(\Omega^n, \mu^n)$
it holds that
$$\Var(f) \leq C\log(1/\mu^{\ast}) \sum _{i\leq n} \frac {||\Delta _i
f||_2^2}{\log \Big (||\Delta_i f||_2 /||\Delta_i f||_1 \Big )} \,
.$$
where $\Delta _i f = \sum _{S : i\in S} f_S$.
\end{Theorem}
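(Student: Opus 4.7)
The plan is to mimic Talagrand's argument in~\cite{Talagrand:94}, replacing the Bonami–Beckner hypercontractive inequality by a version valid on a general product space; it is this replacement that introduces the extra factor $\log(1/\mu^\ast)$. First I would use the orthogonality of distinct parts of the Efron–Stein decomposition to record
\[
\Var(f)=\sum_{\emptyset\ne S\subseteq[n]}\|f_S\|_2^2,\qquad \|\Delta_i f\|_2^2=\sum_{S\ni i}\|f_S\|_2^2,
\]
and then, using $|S|=\sum_{i\in S}1$, to rewrite the variance as
\[
\Var(f)=\sum_{i=1}^n\sum_{S\ni i}\frac{\|f_S\|_2^2}{|S|}.
\]
This reduces the theorem to showing, coordinate by coordinate, that $\sum_{S\ni i}\|f_S\|_2^2/|S|$ is at most $C\log(1/\mu^{\ast})\,\|\Delta_i f\|_2^2/\log(\|\Delta_i f\|_2/\|\Delta_i f\|_1)$.

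Next, I would introduce the noise semigroup $T_\rho$ that acts on the Efron–Stein decomposition as $T_\rho(\sum_S g_S)=\sum_S\rho^{|S|}g_S$. The substantive new ingredient is the existence of some $\rho_0\in(0,1)$ with $\log(1/\rho_0)\le c\log(1/\mu^{\ast})$ such that $\|T_{\rho_0}g\|_2\le\|g\|_{4/3}$ for every $g\in L^2(\Omega^n,\mu^n)$. Combined with the interpolation $\|g\|_{4/3}^2\le\|g\|_1\|g\|_2$ (from $L^{4/3}$ sitting halfway between $L^1$ and $L^2$) and applied to $g_i:=\Delta_i f$, this yields
\[
\sum_{S\ni i}\rho_0^{2|S|}\|f_S\|_2^2=\|T_{\rho_0}g_i\|_2^2\le\|g_i\|_1\,\|g_i\|_2.
\]

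Third, I would apply Talagrand's two-scale truncation. For an integer threshold $k\ge 1$,
\[
\sum_{S\ni i}\frac{\|f_S\|_2^2}{|S|}\;\le\;\sum_{\substack{S\ni i\\|S|\le k}}\|f_S\|_2^2\;+\;\frac1k\sum_{\substack{S\ni i\\|S|>k}}\|f_S\|_2^2\;\le\;\rho_0^{-2k}\|g_i\|_1\|g_i\|_2\;+\;\frac{\|g_i\|_2^2}{k},
\]
using the hypercontractive bound on the low-degree part and the trivial bound on the high-degree part. Choosing $k=k_i$ of order $\log(\|g_i\|_2/\|g_i\|_1)/\log(1/\rho_0)\asymp\log(\|g_i\|_2/\|g_i\|_1)/\log(1/\mu^{\ast})$ balances the two contributions at order $\log(1/\mu^{\ast})\,\|g_i\|_2^2/\log(\|g_i\|_2/\|g_i\|_1)$; summing over $i$ then yields the theorem.

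The chief obstacle is the hypercontractive bound $\|T_{\rho_0}g\|_2\le\|g\|_{4/3}$ with the required dependence $\log(1/\rho_0)\asymp\log(1/\mu^{\ast})$. On a two-point factor of mass $(\mu^{\ast},1-\mu^{\ast})$ the Diaconis–Saloff-Coste formula gives a log-Sobolev constant of order $1/\log(1/\mu^{\ast})$; this scaling extends to a general finite factor of minimum atom weight $\mu^{\ast}$, and tensorization of the log-Sobolev inequality across the $n$ coordinates delivers a hypercontractive estimate for the product semigroup with the required parameter $\rho_0$. Once this ingredient is in place, the rest of the proof is Talagrand's two-scale balancing argument applied level by level in the Efron–Stein decomposition.
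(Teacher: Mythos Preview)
Your proposal is correct and follows essentially the same route as the paper: rewrite $\Var(f)=\sum_i\sum_{S\ni i}\|f_S\|_2^2/|S|$, reduce to a coordinate-wise bound on $g_i=\Delta_i f$, control the low-degree part by hypercontractivity of the Efron--Stein noise operator and the high-degree part trivially, then optimize the cutoff. The only differences are cosmetic. The paper uses $(2,3/2)$-hypercontractivity with the explicit constant $\sigma\ge\alpha^2/6$ taken from Wolff~\cite{Wolff:07}, together with $\|g\|_{3/2}^3\le\|g\|_1\|g\|_2^2$, whereas you use $(2,4/3)$-hypercontractivity obtained from the Diaconis--Saloff-Coste log-Sobolev constant, together with $\|g\|_{4/3}^2\le\|g\|_1\|g\|_2$; both give $\log(1/\rho_0)\asymp\log(1/\mu^\ast)$. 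The paper also keeps the factor $1/|S|$ in the low-degree sum (bounding $\sum_{k\le m}(6/\alpha^2)^k/k$ by a geometric tail), while you drop it and bound $\sum_{|S|\le k}\|f_S\|_2^2$ directly; your cruder estimate still closes provided the cutoff is taken as $k=c\,\log(\|g_i\|_2/\|g_i\|_1)/\log(1/\rho_0)$ with a constant $c<1/2$, so that $\rho_0^{-2k}\|g_i\|_1\|g_i\|_2=R^{2c-1}\|g_i\|_2^2$ is dominated by $\|g_i\|_2^2/k$. You should make that constraint on $c$ explicit, since with $c\ge 1/2$ the two terms do not balance at the claimed order.
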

The proof is almost identical to Talagrand's proof using Efron-Stein decomposition instead of Fourier expansion and
known bounds on the hyper-contractive constants of finite probability spaces. In particular we'll use the following result of
Wolff~\cite{Wolff:07}:
\begin{Theorem} \label{thm:wolff}
For $g : \Omega^n \to \R$, let:
$$ T_\Theta g = \sum _S \Theta ^{|S|} g_S \, ,$$
Then for all $g$ it holds that:
\[
\| T_{\sigma} g \|_2 \leq \| g \|_{3/2},
\]
for all
$$ \sigma \leq \Big ( \frac{(1-\alpha)^{2-4/3} - \alpha^{2-4/3}}{(1-\alpha)\alpha^{1-4/3} - \alpha(1-\alpha)^{1-4/3}} \Big)^{1/2}.$$ . \end{Theorem}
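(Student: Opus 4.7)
The plan is to follow the classical two-step recipe for $(2,3/2)$-hypercontractivity: first tensorize to reduce to the single-coordinate case, then solve the one-dimensional problem by reducing to a two-point space. This is the blueprint used by Bonami and Beckner for the uniform Bernoulli measure; the $\alpha$-dependent formula in the statement arises from solving the analogous two-point problem for the weighted Bernoulli measure with atom masses $\alpha$ and $1-\alpha$.

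The tensorization step is a general feature of $(p,q)$-hypercontractive semigroups. I would prove by induction on $n$ that if $\|T_\sigma g\|_2 \leq \|g\|_{3/2}$ holds for every $g$ on each single factor $(\Omega_i, \mu_i)$, then it holds on the product $\prod_{i=1}^n (\Omega_i,\mu_i)$ with the same $\sigma$: view $g(x_1,\ldots,x_n)$ as a function of $x_n$ with values in $L^2(\prod_{i<n}\mu_i)$, apply the one-dimensional inequality pointwise, and invoke the inductive hypothesis. The one subtle point is a passage from $L^2(L^{3/2})$ to $L^{3/2}(L^2)$ via a Minkowski-type reasoning, which is available precisely because $3/2 \leq 2$.

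For the single-coordinate case I would first reduce to the two-point space. The identity $\|T_\sigma g\|_2^2 = (\E g)^2 + \sigma^2 \Var(g)$ shows that the left-hand side depends on $g$ only through its mean and variance, while $\|g\|_{3/2}$ is a convex functional; a rearrangement argument then shows that among all $g$ on $(\Omega,\mu)$ with fixed mean and variance, $\|g\|_{3/2}$ is minimized when $g$ is supported on two atoms of masses $\alpha = \mu^{\ast}$ and $1-\alpha$. This reduces the problem to
\[
\bigl(\alpha u + (1-\alpha)v\bigr)^2 + \sigma^2 \alpha(1-\alpha)(u-v)^2 \;\leq\; \bigl(\alpha |u|^{3/2} + (1-\alpha)|v|^{3/2}\bigr)^{4/3}
\]
for real $u,v$ and the stated $\sigma$. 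By homogeneity I would normalize $v = 1$ and view $u \geq 0$ as a single real parameter. Writing down the simultaneous conditions that equality holds and that the $u$-derivative vanishes at the extremal point produces two equations in $\sigma$ and the extremizer $u_*$; eliminating $u_*$ yields exactly the rational expression in the statement, with the exponents $2 - 4/3 = 2/3$ and $1 - 4/3 = -1/3$ coming from differentiating $|\cdot|^{3/2}$ and the outer $(\cdot)^{4/3}$ power.

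The main obstacle is the two-point inequality, specifically controlling the sign configurations and the non-smoothness of $|u|^{3/2}$ at the origin. One must verify that the extremum for the sharp $\sigma$ is attained at strictly positive $u,v$, so that the smooth calculus argument applies, and check separately that no mixed-sign pair $(u,v)$ can violate the inequality. A secondary technical point is making the reduction to the two-point case fully rigorous: the cleanest route is a direct variational argument that perturbs $g$ along directions preserving mean and variance, showing that $\|g\|_{3/2}$ strictly decreases unless $g$ already takes only two values, one on an atom of mass $\alpha$ and the other on its complement.
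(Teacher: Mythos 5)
You should first be aware that the paper does not actually prove Theorem~\ref{thm:wolff}: it is imported as a result of Wolff, and the proof environment that follows it in the source establishes only the Remark, namely that the admissible range of $\sigma$ contains $\alpha^2/6$. That argument applies Cauchy's mean value theorem to $f(x)=x^{2/3}$ and $g(x)=-(1-x)x^{-1/3}$, whose difference quotients over $[\alpha,1-\alpha]$ reproduce exactly the ratio in the statement. So you are attempting something the paper deliberately outsources. Your blueprint --- tensorize to $n=1$ using that $(p,q)$-hypercontractivity of each factor passes to products when $p\le q$ (Minkowski's integral inequality), then observe that $\|T_\sigma g\|_2^2=(\E g)^2+\sigma^2\Var(g)$ for a single coordinate and reduce to a two-point extremal problem --- is indeed the standard route and is essentially how Wolff's paper proceeds; your "eliminate the extremizer" heuristic is consistent with the critical $\sigma^2$ being the ratio $\bigl(f(1-\alpha)-f(\alpha)\bigr)/\bigl(g(1-\alpha)-g(\alpha)\bigr)$ for the two functions above.

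That said, the two steps you defer are exactly where the difficulty of the theorem lives, and neither is carried out. First, the reduction to a two-valued function with level sets of measure exactly $\alpha$ and $1-\alpha$ is not a routine rearrangement: the stationarity condition for minimizing $\E|g|^{3/2}$ at fixed mean and variance reads $\tfrac{3}{2}|t|^{1/2}\mathrm{sgn}(t)=\lambda+2\mu t$, which admits up to four roots, so the minimizer is not automatically two-valued; and even granting two-valuedness one must show the optimal measure split is $(\mu^{\ast},1-\mu^{\ast})$ rather than some other achievable split. This is the main content of Wolff's argument. Second, for the two-point inequality itself, setting the value and the $u$-derivative of the difference of the two sides to zero and eliminating $u_{\ast}$ only identifies a candidate critical $\sigma$; it does not prove that the inequality holds for all $(u,v)$ (including mixed signs and the non-smooth point $u=0$) whenever $\sigma$ is below that value, which is a separate and nontrivial verification. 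In short: your plan is the correct high-level strategy for a known, citable theorem --- and the paper simply cites it --- but as a proof it has genuine gaps at both of its load-bearing steps.
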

\begin{Remark}
In particular one may take $\sigma = \alpha^2/6$.
\end{Remark}
\begin{proof}
Let
$$ f(x) = x^{2/3}, \quad g(x)=-(1-x)/x^{1/3}.$$
By Lagrange's theorem we may take:
$$ \sigma^2 = \frac{f'(\xi_1)}{g'(\xi_2)} \, ,$$
for some $\xi_1,\xi_2 \in (\alpha, 1-\alpha)$.
Clearly
\[
f'(x) = (2/3)x^{-1/3}, \quad g'(x) = (2/3)x^{-1/3} + (1/3)x^{-4/3},
\]
are decreasing, and therefore
\[
\sigma^2 \geq \frac{f'(1-\alpha)}{g'(\alpha)} =
\frac{2(1-\alpha)^{-1/3}}{2\alpha^{-1/3}+ \alpha^{-4/3}} \geq \frac{2}{3 \alpha^{-4/3}} \geq \frac{\alpha^2}{6}.
\]
\end{proof}

We now prove Theorem~\ref{talag}.
\begin{proof}
For a real function $g$
from our space, denote
$$ M^2(g) = \sum _{S \neq \emptyset} {\| g_S \|_2^2 \over |S|} \, .$$
So
$$\Var[f] = \sum _{S \neq \emptyset} \| f_S \|_2^2 = \sum _{i=1}^n M^2(\Delta _i f) \, .$$
Note that the statement of the theorem follows if we prove that for
any function $g$ with $\E g = 0$,
\begin{equation}\label{claim}
M^2(g) \leq K \log(1/\alpha) \frac{||g||_2^2}{\log\Big(
||g||_2/||g||_1 \Big )} \, .
\end{equation}
The statement of the theorem follows by apply (\ref{claim}) to $\Delta_i f$ and summing the inequalities.
To prove (\ref{claim}) we use hypercontractivity. The following
proposition is proved in the end of this note.

Applying Theorem~\ref{thm:wolff} gives that for any integer $k>0$,
$$ \sigma^{2k} \sum _{0 < |S|=k} \|g_S \|_2^2 \leq \sum _{S} \sigma^{2|S|} \| g_S \|_2^2  = ||T_\sigma g||_2^2
\leq ||g||_{3/2}^2 \, ,$$ hence
$$\sum _{|S|=k} \| g_S \|_2^2 \leq \Big ( {6 \over \alpha ^2} \Big
)^k ||g||_{3/2}^2 \, .$$

Fix an integer $m>0$, and sum the
previous inequality for all $k\leq m$ to get
$$ \sum _{|S| \leq m} {\| g_S \|_2^2 \over |S|} \leq \sum _{k\leq
m} {\Big ( {6 \over \alpha ^2} \Big )^k \over k} ||g||_{3/2}^2
 \leq {2\Big ( {6 \over \alpha ^2} \Big )^m \over m} ||g||_{3/2}^2 \, ,$$
 where the last inequality comes from the fact that the ratio
 between two consecutive summands in the sum is greater than $2$.
We now have
\begin{eqnarray} \label{final} M^2(g) &=& \sum _{|S|\leq m}
{\| g_S \|_2^2 \over |S|} + \sum _{|S|>m} {\| g_S \|_2^2 \over |S|}
\leq {2\Big ( {6 \over \alpha ^2} \Big )^m \over m} ||g||_{3/2}^2
+ {||g||_2^2 \over m} \nonumber \\ &\leq& {2 \over m}\Big[ \Big (
{6 \over \alpha ^2} \Big )^m  ||g||_{3/2}^2 + ||g||_2^2 \Big ]\,
.\end{eqnarray} We now choose optimal $m$. Choose largest $m$ such
that $\Big ( {6 \over \alpha ^2} \Big )^m ||g||_{3/2}^2 \leq
||g||_2^2$, hence
$$ \Big ( {6 \over \alpha ^2} \Big )^{m+1}  ||g||_{3/2}^2 \geq ||g||_2^2 \implies m+1 \geq
{2 \log \Big ( ||g||_2/||g||_{3/2} \Big ) \over \log (6/\alpha^2)}
\, .$$ Plugging this back into (\ref{final}) gives
$$ M^2(g) \leq C { \log(6/\alpha^2) ||g||_2^2 \over \log \Big ( ||g||_2/||g||_{3/2}
\Big ) } \, . $$ An application of Cauchy-Schwartz gives
$$ ||g||_{3/2}^3 \leq ||g||_1 ||g||_2^2 \, ,$$
hence
$$\Big ( {||g||_{3/2} \over ||g||_2 } \Big ) ^3 \leq { ||g||_1
\over ||g||_2 } \, ,$$ which concludes the proof of (\ref{claim})
and so we are done.
\end{proof}

\subsection{The formula for $\{0,1\}$ valued functions}
For $\{0,1\}$ valued functions, Theorem~\ref{talag} has a very simple formulation.
\begin{Lemma}
Let $f : \prod_{i=1}^n \Omega_i \to \{0,1\}$. Then
\[
\| \Delta_i f \|_1 = 2 I_i(f).
\]
\end{Lemma}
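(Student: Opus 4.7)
The plan is to unpack both sides using the Efron-Stein decomposition and the Boolean structure of $f$, then observe that for a Bernoulli random variable the mean absolute deviation is exactly twice the variance.

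First I would identify $\Delta_i f$ as a conditional-expectation residual. From the definition in~\ref{def:efron_stein}, if $i \notin S$ then $\E[f_S \mid X_{[n]\setminus\{i\}}] = f_S$, while if $i \in S$ then $S \not\subseteq [n]\setminus\{i\}$ and so $\E[f_S \mid X_{[n]\setminus\{i\}}] = 0$. Summing over $S$ gives
\[
\E[f \mid X_{[n]\setminus\{i\}}] = \sum_{S : i \notin S} f_S, \qquad \Delta_i f = f - \E[f \mid X_{[n]\setminus\{i\}}].
\]

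Next, I would write $p_i(x) := \E[f \mid X_{[n]\setminus\{i\}} = x_{[n]\setminus\{i\}}] = \Pr[f=1 \mid X_{[n]\setminus\{i\}} = x_{[n]\setminus\{i\}}]$. Conditioned on $X_{[n]\setminus\{i\}}$, the $\{0,1\}$-valued random variable $f$ is Bernoulli$(p_i)$. A direct computation gives
\[
\E\bigl[|f - p_i| \,\big|\, X_{[n]\setminus\{i\}}\bigr] = p_i(1-p_i) + (1-p_i) p_i = 2 p_i (1-p_i),
\]
whereas $\Var[f \mid X_{[n]\setminus\{i\}}] = p_i(1-p_i)$. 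Taking the outer expectation,
\[
\|\Delta_i f\|_1 = \E\bigl[|f - p_i|\bigr] = 2\,\E[p_i(1-p_i)] = 2\,\E\bigl[\Var[f \mid X_{[n]\setminus\{i\}}]\bigr] = 2 I_i(f).
\]

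There is really no obstacle here; both ingredients are elementary. The only thing to be careful about is that the lemma is specific to $\{0,1\}$-valued $f$: the identity $\E|X - \E X| = 2\Var(X)$ only holds for Bernoulli random variables, which is exactly why the assumption $f : \prod \Omega_i \to \{0,1\}$ is essential. The Efron-Stein identification of $\Delta_i f$ with $f - \E[f \mid X_{[n]\setminus\{i\}}]$ is the only nontrivial step, and it follows directly from the vanishing property in Definition~\ref{def:efron_stein}.
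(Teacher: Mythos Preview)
Your proof is correct and follows essentially the same approach as the paper: identify $\Delta_i f$ with $f - \E[f \mid X_{[n]\setminus\{i\}}]$, observe that conditionally $f$ is Bernoulli with parameter $p_i$, use the identity $\E|f-p_i| = 2p_i(1-p_i) = 2\Var[f \mid X_{[n]\setminus\{i\}}]$, and take the outer expectation. You are slightly more explicit than the paper in deriving the residual identity from the Efron--Stein vanishing property, but the argument is the same.
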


\begin{proof}
Let $f : [q] \to \{0,1\}$ be a function with $\E[f] = p$.
Note that
\[
\E[|f-p|] = p(1-p)+(1-p)p = 2 \Var[f].
\]
Since
\[
\Delta_i f = f - \E[f | X_1,\ldots,X_{i-1},X_{i+1},\ldots,X_n],
\]
we see that
\[
E[|\Delta_i(f)| | X_1,\ldots,X_{i-1},X_{i+1},\ldots,X_n] = 2 \Var[\Delta_i | X_1,\ldots,X_{i-1},X_{i+1},X_n],
\]
and therefore taking expected value of $X_1,\ldots,X_{i-1},X_{i+1},X_n$ we obtain that
\[
\| \Delta_i f \|_1 = 2 I_i(f).
\]
\end{proof}

We now obtain the following corollaries
\begin{Corollary} \label{cor:talag1}
There exists some universal constant $C$ such that for any probability spaces
$(\Omega,\mu)$,
and any function $f : \Omega^n \to \{0,1\}$
it holds that
$$\Var_{\mu}(f) \leq C\log(1/\mu^{\ast}) \sum _{i=1}^n \frac {I_i(f)}{\log(1/2) - \log(I_i)/2}
.$$
In particular if $I_i(f) \leq \delta$ for all $i$ then:
\[
\sum _{i=1}^n {I_i(f)} \geq \frac{1}{2 C \log(1/\mu^{\ast})} (\log(1/\delta) - \log(1/4)) \Var_{\mu}(f).
\]
\end{Corollary}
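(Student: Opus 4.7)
The strategy is to specialize Theorem~\ref{talagthm} to a $\{0,1\}$-valued function by evaluating the two norms appearing on its right-hand side. The preceding lemma already supplies the $L^1$ identity $\|\Delta_i f\|_1 = 2 I_i(f)$; the remaining ingredient is a matching $L^2$ identity.

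First I would observe that for any $f$, the Efron-Stein orthogonality built into Definition~\ref{def:efron_stein} yields
\[
\E[f \mid X_1,\ldots,X_{i-1},X_{i+1},\ldots,X_n] = \sum_{S \not\ni i} f_S,
\]
so that $\Delta_i f = f - \E[f \mid X_1,\ldots,X_{i-1},X_{i+1},\ldots,X_n]$. Taking squared $L^2$ norms and conditioning on the remaining coordinates gives
\[
\|\Delta_i f\|_2^2 = \E\bigl[\Var[f \mid X_1,\ldots,X_{i-1},X_{i+1},\ldots,X_n]\bigr] = I_i(f).
\]
Combined with the preceding lemma, this yields
\[
\frac{\|\Delta_i f\|_2}{\|\Delta_i f\|_1} = \frac{\sqrt{I_i(f)}}{2 I_i(f)} = \frac{1}{2\sqrt{I_i(f)}},
\]
and therefore $\log(\|\Delta_i f\|_2/\|\Delta_i f\|_1) = \log(1/2) - \tfrac{1}{2}\log I_i(f)$. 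Substituting these identifications into the conclusion of Theorem~\ref{talagthm} produces the first displayed inequality of the corollary directly.

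For the second inequality I would use the uniform hypothesis $I_i(f) \leq \delta$. Since the logarithm is increasing, the denominator satisfies $\log(1/2) - \tfrac{1}{2}\log I_i(f) \geq \tfrac{1}{2}(\log(1/\delta) - \log 4)$ for every $i$. Pulling this $i$-independent lower bound out of the sum in the first inequality and rearranging the resulting estimate for $\Var_\mu(f)$ gives a lower bound on $\sum_i I_i(f)$ of the stated form, with the numerical constants absorbed into $C$ and with the $\log 4$ rewritten via $-\log 4 = \log(1/4)$.

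No real obstacle is expected here: once the identification $\|\Delta_i f\|_2^2 = I_i(f)$ is in hand, both statements follow by substitution into Theorem~\ref{talagthm} and an elementary manipulation of logarithms. The only point requiring a moment of care is reading off $\|\Delta_i f\|_2^2$ from the Efron-Stein decomposition, which is immediate from the conditional-mean-zero property in Definition~\ref{def:efron_stein}.
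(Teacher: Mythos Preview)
Your argument is correct and is exactly the implied derivation in the paper: the corollary is stated immediately after the lemma $\|\Delta_i f\|_1 = 2I_i(f)$ with no separate proof, so the intended route is precisely to combine that lemma with the identity $\|\Delta_i f\|_2^2 = I_i(f)$ and substitute into Theorem~\ref{talagthm}. One small remark: your computation gives $\log(1/\delta)-\log 4 = \log(1/\delta)+\log(1/4)$, whereas the displayed statement has $\log(1/\delta)-\log(1/4)$; this is a sign slip in the paper's statement, not in your argument.
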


\begin{Corollary} \label{cor:talag2}
There exists some universal constant $C$ such that for any probability space $(\Omega,\mu)$
and any function $f : \Omega^n \to \{0,1\}$ which is symmetric it holds that
it holds that
\[
\sum _{i=1}^n {I_i(f)} \geq \frac{C}{\log(1/\alpha)} \log n \Var_{\mu}(f).
\]
\end{Corollary}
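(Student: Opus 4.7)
The plan is to use the transitive symmetry to collapse the sum of influences to $n$ times a common value, and then invoke Corollary~\ref{cor:talag1}.

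First I would verify that the transitive symmetry forces $I_1(f)=\cdots=I_n(f)$. Since the transitive group $\Sigma\subset S(n)$ satisfies $f(x_\sigma)=f(x)$ and the product measure is permutation-invariant, conditioning $X$ on $X_{-i}$ has the same joint law as conditioning $Y=X_\sigma$ on $Y_{-\sigma^{-1}(i)}$. Hence $I_i(f)=I_{\sigma^{-1}(i)}(f)$ for every $\sigma\in\Sigma$, and transitivity yields a common value $I$ with $\sum_{i=1}^n I_i(f)=nI$.

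Next I would apply the ``in particular'' clause of Corollary~\ref{cor:talag1} with $\delta=I$, which gives
\[
nI \;\geq\; \frac{\log(1/I)-\log(1/4)}{2C_0 \log(1/\mu^{\ast})}\,\Var_\mu(f),
\]
where $C_0$ is the universal constant from that corollary. Writing $L=\log(1/\mu^{\ast})$ and $V=\Var_\mu(f)$, the target becomes $nI\geq C\log n\cdot V/L$ for some absolute $C$.

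I would split on whether $I\leq n^{-1/2}$. If so, then $\log(1/I)\geq\tfrac12\log n$, hence $\log(1/I)-\log(1/4)\geq \tfrac12\log n+\log 4\geq \tfrac12\log n$, and the displayed inequality yields $nI\geq(\log n/(4C_0 L))\,V$ directly. Otherwise $nI>\sqrt n$; since $V\leq 1/4$ and (assuming $|\Omega|\geq 2$, the statement being trivial otherwise) $L\geq\log 2$, the inequality $\sqrt n\geq (\log n/L)\,V$ holds for all $n$ beyond some absolute $n_0$. For the finitely many smaller $n$, the elementary Efron--Stein identity $\sum_i I_i(f)=\sum_{\emptyset\neq S\subseteq[n]} |S|\,\|f_S\|_2^2\geq V$ is already within a constant factor of $(\log n/L)\,V$, and the slack is absorbed into $C$. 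The only real work is bookkeeping the absolute constants across these three regimes; symmetry has done the analytic heavy lifting by reducing the problem to a single scalar $I$, so the only obstacle beyond Corollary~\ref{cor:talag1} is clerical.
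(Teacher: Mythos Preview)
Your proof is correct and follows essentially the same approach as the paper: use the transitive symmetry to make all influences equal to a common value $I$, then case-split on the size of $I$, invoking Corollary~\ref{cor:talag1} when $I$ is small and using the trivial bound $nI$ when $I$ is large. The only difference is the threshold: the paper cuts at $I \gtrless \log n/(100n)$ rather than your $I \gtrless n^{-1/2}$, which is slightly cleaner because then $nI \geq \log n/100$ already dominates $(\log n / L)\,V$ for every $n$, so no separate small-$n$ Efron--Stein argument is needed.
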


\begin{proof}
If $I_i \geq \frac{\log n}{100 n}$ the claim follows immediately.
Otherwise we apply the previous corollary and note that $\log(1/\delta) = \Omega(\log n)$.
\end{proof}

\begin{Remark}
It is a fundamental question both for the Boolean case and for the case of larger alphabet
that we consider here to find conditions (and even appropriate definitions) for sharp thresholds for monotone functions.
One may want to obtain similar result on the basis of the fact that
the function $f$ has low influences. However, this condition on its
own does not suffice.
Let $m : \{-1,1\}^{2n} \to \{-1,1\}$ be the
majority function defined as follows:
\[
m_n(x) = sign(\sum_{i=1}^{2n} x_i).
\]
When the sum is $0$ the function is defined arbitrarily in such a way
that it is $-1$ on half of the balanced inputs and $1$ on the other
half.
We write vectors $(x,y) \in \{-1,1\}^{2n}$ where $x$ and $y$ are two
vectors of length $n$. Let $f_n = m(x,-y)$.
Then clearly all of $f$ influences are of order at most $n^{-1/2}$. On the
other hand, it is easy to see that $\lim_{n \to \infty}
\E_{p(n)}[f] = 0$ for all functions $p(n)$ satisfying
$\lim_{n \to \infty} n p(n) = 0$ and $\lim_{n \to \infty} n (1-p(n)) =
0$.
\end{Remark}

%\begin{thebibliography}{99}
\bibliographystyle{abbrv}
\bibliography{all}
%\end {thebibliography}

\end {document}